\newtheorem{theorem}{{Theorem}}[section]
\newtheorem{lemma}[theorem]{{Lemma}}
\newtheorem{remark}[theorem]{{Remark}}
\newtheorem{proposition}[theorem]{{Proposition}}
\def\R{\mathbb R}
\def\N{\mathbb N}
\def\Z{\mathbb Z}
\numberwithin{equation}{section}
\begin{document}

\title[Radial solutions of the Hénon equation]{Asymptotic profile and Morse index of the radial solutions of the Hénon equation}

\thanks{Wendel Leite da Silva was partially supported by CNPq and CAPES. Ederson Moreira dos Santos was partially supported by CNPq grant 307358/2015-1 and FAPESP grant 2015/17096-6.}
\author{Wendel Leite da Silva and Ederson Moreira dos Santos}
\address{
Instituto de Ci\^encias Matem\'aticas e de Computa\c{c}\~ao \\ Universidade de S\~ao Paulo,  CEP 13560-970 - S\~ao Carlos - SP - Brazil}
\email{
wendelleite94@gmail.com, ederson@icmc.usp.br}

\date{\today}
\subjclass[2010]{35B06; 35B40; 35J15; 35J61}
\keywords{Semilinear elliptic equations; H\'enon equation; Radial solutions; Qualitative properties}

\begin{abstract}
We consider the Hénon equation
\begin{equation}\label{alphab}
-\Delta u = |x|^{\alpha}|u|^{p-1}u \ \ \textrm{in} \ \ B^N, \quad
           u = 0                     \ \ \textrm{on}\ \ \partial B^N,
           \tag{$P_{\alpha}$}
\end{equation}
where $B^N\subset \R^N$ is the open unit ball centered at the origin, $N\geq 3$, $p>1$ and $\alpha> 0$ is a parameter. We show that, after a suitable rescaling, the two-dimensional Lane-Emden equation 
\[
-\Delta w = |w|^{p-1}w\quad \text{in}\ B^2,\quad w=0\quad \text{on}\ \partial B^2,
\]
where $B^2 \subset \R^2$ is the open unit ball, is the limit problem of \eqref{alphab},  as $\alpha \to \infty$, in the framework of radial solutions. We exploit this fact to prove several qualitative results on the radial solutions of \eqref{alphab} with any fixed number of nodal sets: asymptotic estimates on the Morse indices along with their monotonicity with respect to $\alpha$; asymptotic convergence of their zeros; blow up of the local extrema and on compact sets of $B^N$.  All these results are proved for both positive and nodal solutions. 
\end{abstract}
\maketitle

\section{Introduction}
The qualitative analysis of solutions of partial differential equations is a very important field of research, and it turns out that the H\'enon equation \cite{henon} is an excellent prototype for the study of some fundamental problems in this subject. For example, the symmetry of least energy solutions and least energy nodal solutions \cite{BWW, ederson, SW, smets-su-willem} and some concentration phenomena \cite{cao-peng-yan, BW, pacella-indiana}. To investigate the symmetry of least energy solutions and least energy nodal solutions, which are known as low Morse indices solutions, a very useful argument, based on some ideas introduced in \cite{aftalion}, is that radially symmetric solutions have large Morse indices. However, the results in \cite{aftalion} are presented for autonomous problems and their proofs cannot be adapted to the nonautonomous case, which includes the H\'enon equation.  Indeed, a conjecture on the symmetry breaking of the least energy nodal solutions of the superlinear subcritical H\'enon equation  remained opened for at least 14 years, since the paper \cite{BWW}, was partially solved in \cite{ederson} for the case of $N=2$ (see also \cite{wendel}), and completely settled very recently in \cite{amadori1, amadori}. As presented ahead, as a byproduct of the main theorems in this paper, we complement some of the results in \cite{amadori2, amadori1, amadori, BW2, wendel,  weth, ederson} with new information on the asymptotic profile and Morse indices of the radial solutions of the Hénon equation, for both positive and nodal solutions.

We consider the Hénon equation
\begin{equation}\label{alpha}
\left\{
\begin{array}{l}
\begin{aligned}
-\Delta u &= |x|^{\alpha}|u|^{p-1}u &\textrm{in}&\ \ B^N, \vspace{0.3 cm}\\
           u &= 0                     &\textrm{on}&\ \ \partial B^N,
\end{aligned}
\end{array}
\right.\tag{$P_{\alpha}$}
\end{equation}
where $B^N\subset \R^N$ is the open unit ball centered at the origin, $N\geq 3$, $p>1$ and $\alpha> 0$ is a parameter. 

We recall that, for each positive integer $m$, it is shown in \cite{nagasaki} that \eqref{alpha} admits a unique radial $C^2(\,\overline{B^N}\,)$-solution, positive at zero, with exactly $m$ nodal sets, for $1<p<2^*_\alpha-1$, where $2^*_\alpha:= 2(N+\alpha)/(N-2)$. For a fixed positive integer $m$, we denote this solution by $u_{\alpha}$. Observe that the condition $p<2^{*}_{\alpha} -1$ with $\alpha>0$ is equivalent to $\alpha > \alpha_p$, where
$$
\alpha_p := \max\left\{0,\frac{p(N-2)-(N+2)}{2}\right\}.
$$

We show that, after a suitable rescaling, the two-dimensional Lane-Emden equation 
\begin{equation}\label{problema-limite}
-\Delta w = |w|^{p-1}w\quad \text{in}\ B^2,\quad w=0\quad \text{on}\ \partial B^2,
\tag{$L$}
\end{equation}
where $B^2 \subset \R^2$ is the open unit ball, is the limit problem of \eqref{alpha},  as $\alpha \to \infty$, in the framework for radial solutions.

From now on, for $m\in \N$ and $p>1$ fixed, we denote by $w$ the unique radial solution of \eqref{problema-limite} with exactly $m$ nodal sets and such that $w(0)>0$; see \cite{ni, kajikiya}. We also denote by $\lambda_1 < \ldots < \lambda_m<0$ the radial negative eigenvalues of the singular problem
 \begin{equation*}
-\Delta \psi -p|w|^{p-1} \psi = \lambda \frac{\psi}{|y|^2}\ \  \text{in}\ \ B^2\backslash\{0\},\quad  \psi=0\ \ \text{on}\ \ \partial B^2;
\end{equation*}
see \cite[Proposition 2.9]{harrabi} and \cite[Proposition 1.1]{amadori1}. To simplify notation, we identify radial functions defined on $\overline{B^N}$ with their representative defined on the interval $[0,1]$.

\begin{theorem}\label{main-theorem}
For $m\in \N$ and $p>1$ fixed, let $u_\alpha$ and $w$ be the radial solutions of \eqref{alpha} and \eqref{problema-limite}, respectively, with $m$ nodal sets such that $u_\alpha(0)>0$ and $w(0)>0$. Then 
\begin{equation}\label{v_alpha}
v_\alpha(t):= \left(\frac{2}{\alpha+2}\right)^\frac{2}{p-1} u_\alpha \left(t^\frac{2}{\alpha+2}\right),\quad t\in [0,1]
\end{equation}
converges to $w$ in $C^1([0,1]) \cap C^2([\varepsilon, 1])$ as $\alpha\to \infty$, for all $\varepsilon \in (0,1)$.  
\end{theorem}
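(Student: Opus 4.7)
The plan is to transform the radial Hénon ODE into a perturbed planar radial Lane--Emden ODE, establish uniform bounds on the transformed solutions $v_\alpha$, and identify the limit by uniqueness. \emph{Step 1}: writing \eqref{alpha} in radial form and inserting $r = t^{2/(\alpha+2)}$ (using the identity $\alpha \cdot \tfrac{2}{\alpha+2} = 2 - \tfrac{4}{\alpha+2}$), a direct computation shows that $v_\alpha$ defined in \eqref{v_alpha} satisfies
\begin{equation}\label{vODE}
-v_\alpha''(t) - \frac{N_\alpha - 1}{t}\,v_\alpha'(t) = |v_\alpha(t)|^{p-1} v_\alpha(t),\quad t \in (0,1),
\end{equation}
with $v_\alpha'(0) = v_\alpha(1) = 0$, $v_\alpha(0) > 0$, and exactly $m$ nodal sets, where $N_\alpha := 2 + \tfrac{2(N-2)}{\alpha+2} \to 2$ as $\alpha \to \infty$. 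Thus $v_\alpha$ is a radial Lane--Emden profile in a fractional ``dimension'' $N_\alpha$ on the unit ball, and \eqref{problema-limite} is the formal limit $N_\alpha = 2$.

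\emph{Step 2} (uniform bounds on $V_\alpha := v_\alpha(0)$). The divergence form $(t^{N_\alpha - 1} v_\alpha')' = -t^{N_\alpha - 1} |v_\alpha|^{p-1} v_\alpha$ and the non-increasing Hamiltonian $E(t) := \tfrac{1}{2}(v_\alpha')^2 + \tfrac{1}{p+1}|v_\alpha|^{p+1}$ (monotonicity follows from $E'(t) = -\tfrac{N_\alpha-1}{t}(v_\alpha')^2 \le 0$) reduce the uniform control of $v_\alpha$ to uniform bounds on $V_\alpha$. For these I exploit the scale invariance of \eqref{vODE}: the rescaled function
\[
\phi_\alpha(s) := V_\alpha^{-1}\, v_\alpha\bigl(s\, V_\alpha^{-(p-1)/2}\bigr)
\]
solves \eqref{vODE} on $[0, V_\alpha^{(p-1)/2}]$ with $\phi_\alpha(0) = 1$, $\phi_\alpha'(0) = 0$, $\phi_\alpha(V_\alpha^{(p-1)/2}) = 0$ and $m$ nodal sets. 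For the lower bound, using $0 \le \phi_\alpha \le 1$ on its first positivity set, the divergence identity yields $\phi_\alpha(s) \ge 1 - s^2/(2N_\alpha)$, so the first zero of $\phi_\alpha$ is at least $\sqrt{2 N_\alpha} \ge 2$, whence $V_\alpha^{(p-1)/2} \ge 2$, i.e., $V_\alpha \ge 4^{1/(p-1)}$. For the upper bound, if $V_\alpha \to \infty$ along a subsequence then continuous dependence yields $\phi_\alpha \to \phi_\infty$ in $C^1_{\mathrm{loc}}([0,\infty)) \cap C^2_{\mathrm{loc}}((0,\infty))$, where $\phi_\infty$ solves the 2D radial Lane--Emden ODE with $\phi_\infty(0)=1$, $\phi_\infty'(0) = 0$. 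The Emden--Fowler substitution $\phi_\infty(e^\tau) = e^{-2\tau/(p-1)} f(\tau)$ turns the equation for $f$ into an autonomous ODE whose energy $H(\tau) := \tfrac{1}{2}(f')^2 + \tfrac{a^2}{2} f^2 + \tfrac{1}{p+1}|f|^{p+1}$ (with $a = 2/(p-1)$) satisfies $H'(\tau) = 2a (f')^2 \ge 0$ and $H(-\infty) = 0$; one argues that $H$ must grow unboundedly, forcing $\phi_\infty$ to oscillate with infinitely many zeros, which contradicts the fact that $\phi_\alpha$ has only $m$ nodal sets on its growing domain.

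\emph{Step 3}. With $V_\alpha$ controlled on both sides, the monotonicity of $E$ across successive critical points forces $\|v_\alpha\|_\infty = V_\alpha$; the integral formula $v_\alpha'(t) = -t^{1-N_\alpha} \int_0^t \tau^{N_\alpha - 1} |v_\alpha|^{p-1} v_\alpha\, d\tau$ then bounds $v_\alpha'$ on $[0,1]$, and \eqref{vODE} bounds $v_\alpha''$ on $[\varepsilon,1]$. Arzelà--Ascoli supplies a subsequence converging in $C^1([0,1]) \cap C^2([\varepsilon,1])$ to some $v_\infty$ solving the radial 2D Lane--Emden problem with $v_\infty(0) \ge 4^{1/(p-1)} > 0$. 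Since interior zeros of $v_\alpha$ are simple (by ODE uniqueness), $C^1$-convergence preserves their count, so $v_\infty$ has exactly $m$ nodal sets. Uniqueness of such a radial solution of \eqref{problema-limite} \cite{ni, kajikiya} forces $v_\infty = w$, and since the limit is unique, the whole family converges.

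\emph{Main obstacle.} The critical difficulty lies in the upper bound on $V_\alpha$: rigorously proving that the planar radial Lane--Emden ODE with data $(1,0)$ is oscillatory on $[0,\infty)$ requires the Emden--Fowler reduction together with a delicate argument ruling out the scenario in which the monotone energy $H$ could stay bounded while $\phi_\infty$ decays to $0$ with only finitely many sign changes.
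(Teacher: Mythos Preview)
Your argument is correct and takes a genuinely different route from the paper for the two crucial ingredients. For the two-sided bound on $V_\alpha=v_\alpha(0)$, the paper proceeds variationally: it first proves sharp asymptotics for the Nehari energy level $C_{\alpha,m}$ (Proposition~\ref{C_alpha}) and then converts these into $L^\infty$ bounds via a rescaling and a Sobolev-type embedding in fractional dimension (Lemmas~\ref{limitacao-por-baixo}--\ref{limitacao-por-cima}). Your shooting approach is more elementary and self-contained; the oscillation property you flag as the ``main obstacle'' is in fact immediate without Emden--Fowler: if $\phi_\infty>0$ on $[R,\infty)$ then $t\,\phi_\infty'$ is strictly decreasing, and either case ($\phi_\infty'\ge 0$ eventually, or $\phi_\infty'(t_0)<0$ for some $t_0$) forces $\phi_\infty\to-\infty$. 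For the preservation of the nodal count, the paper establishes separate lower bounds on \emph{every} local extremum of $v_\alpha$ (Theorem~\ref{zeros-blowup}(iii), proved in Section~\ref{sec:local}) to rule out collapse of consecutive zeros; your argument via simplicity of zeros of the limit is slicker---just note that the relevant point is that zeros of $v_\infty$ (not of $v_\alpha$) are simple, since $v_\infty\not\equiv 0$ solves a second-order ODE, so two zeros of $v_\alpha$ collapsing would force $v_\infty=v_\infty'=0$ at the limit point. The trade-off is that the paper's variational route yields, as by-products, the energy asymptotics of Theorem~\ref{theorem-constants} and the uniform local-extrema bounds of Theorem~\ref{zeros-blowup}(iii), both used elsewhere; your route reaches Theorem~\ref{main-theorem} more directly but does not deliver those.
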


The change of variables \eqref{v_alpha} was used in \cite{cowan}; see also \cite{gladiali-grossi-neves, wendel, ederson}. There the authors show that, in the particular case of $N=2$, for all $\alpha>0$, the function $v_\alpha$ is precisely the radial solution of \eqref{problema-limite} with $m$ nodal sets. In other words, the sequence of functions $(v_\alpha)_{\alpha>0}$ is constant if $N=2$. Although not constant for $N \geq 3$, Theorem \ref{main-theorem} shows that the sequence \eqref{v_alpha} has the same limit for every $N\geq2$. 

\begin{remark}
To prove Theorem \ref{main-theorem} we show that the absolute values of all the local extrema of $u_{\alpha}$ explode at the same rate $\alpha^{2/(p-1)}$ as $\alpha \to \infty$; see Theorem \ref{zeros-blowup} (iii) ahead, and \cite[Theorem 3.1-E]{BW} for the positive radial solution. For instance, this rate also holds for the $L^{\infty}$-norm of the least energy solutions of \eqref{alpha}; see \cite[Lemma 4.3]{cao-peng-yan} and \cite[Theorem 4.1-E]{BW}.
\end{remark}

Based on Theorem \ref{main-theorem}, we obtain some information on the Morse index of the solution $u_{\alpha}$, as $\alpha \to \infty$. Here the symbols $\lceil . \rceil$ and $\lfloor . \rfloor$ represent, respectively, the ceiling and floor functions whose definitions are given as $\lceil \beta \rceil := \min\{k\in \Z : k\geq \beta\}$, $\lfloor \beta \rfloor := \max\{k\in \Z : k\leq \beta\}$, for $\beta\in \R$.

  \begin{theorem}\label{lower-bounds-N>2}
For $m\in \N$ and $p>1$ fixed, let $m(u_\alpha)$ be the Morse index of a radial solution $u_\alpha$ of \eqref{alpha} with $m$ nodal sets. Then there exists $\alpha^*=\alpha^*(p,N,m) >\alpha_p$, that does not depend on $\alpha$, such that
\begin{equation}\label{lower-bound:Morse-index}
m(u_\alpha) \geq m + m\sum_{j=1}^{J_\alpha} N_j\quad \forall \alpha\geq \alpha^*,
\end{equation}
where 
$$J_\alpha := \left\lceil\frac{\sqrt{(N-2)^2-(\lambda_m/2)\alpha^2}}{2}\right\rceil \quad \text{and}\quad  N_j := \frac{(N+2j-2)(N+j-3)!}{(N-2)!j!}. 
$$
Moreover, if $m\geq 2$, then for each $\theta >1$, there exists $\alpha^\star= \alpha^\star(\theta,p,N,m) > \alpha_p$ independent of $\alpha$ such that
\begin{equation}\label{lower-bound:Morse-index2}
m(u_\alpha) \geq m + (m-1)\sum_{j=1}^{K_\alpha(\theta)} N_j\quad \forall \alpha\geq \alpha^\star,
\end{equation}
where 
$$K_\alpha(\theta) := \left\lceil\frac{\sqrt{(N-2)^2-(\lambda_{m-1}/\theta)\alpha^2}}{2}\right\rceil.  
$$
\end{theorem}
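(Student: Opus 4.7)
The plan is to decompose $m(u_\alpha)$ via spherical harmonics and, in each admissible angular mode, construct enough linearly independent negative directions for the Jacobi quadratic form by transplanting the radial eigenfunctions of the singular $2$D problem to $B^N$ through the change of variables of Theorem \ref{main-theorem}. Writing the Jacobi operator $L_\alpha:=-\Delta-p|x|^\alpha|u_\alpha|^{p-1}$ in polar coordinates reduces its spectrum to a family of one-dimensional operators
\[
L_{\alpha,j}h:=-h''-\frac{N-1}{r}h'+\frac{j(N+j-2)}{r^2}h-p\,r^\alpha|u_\alpha|^{p-1}h,\qquad j=0,1,2,\ldots,
\]
each acting on the $N_j$-dimensional space of spherical harmonics of degree $j$, so that $m(u_\alpha)=\sum_{j\ge 0}N_j\,m_j(\alpha)$, with $m_j(\alpha)$ the number of negative eigenvalues of $L_{\alpha,j}$. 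For the radial mode $j=0$, the standard test functions obtained by restricting $u_\alpha$ to its $m$ nodal zones $A_1,\ldots,A_m$ give $Q_\alpha(u_\alpha\chi_{A_k})=(1-p)\int_{A_k}|x|^\alpha|u_\alpha|^{p+1}<0$, hence $m_0(\alpha)\ge m$.

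For $j\ge 1$, let $\psi_1,\ldots,\psi_m$ be the radial eigenfunctions of the singular problem on $B^2$ corresponding to $\lambda_1<\cdots<\lambda_m<0$; an Emden--Fowler analysis of that equation (see \cite{amadori1,harrabi}) yields the asymptotics $\psi_k(t)\sim c_k t^{\sqrt{|\lambda_k|}}$ as $t\to 0^+$, so the transplanted functions
\[
\Phi_{k,\alpha,j,\ell}(x):=\psi_k\bigl(|x|^{(\alpha+2)/2}\bigr)\,Y_{j,\ell}(x/|x|),\qquad \ell=1,\ldots,N_j,
\]
belong to $H^1_0(B^N)$. Setting $t=r^{(\alpha+2)/2}$ and using $|u_\alpha(r)|^{p-1}=\bigl(\tfrac{\alpha+2}{2}\bigr)^2|v_\alpha(t)|^{p-1}$, a direct computation rewrites the Jacobi quadratic form as
\[
\frac{2\,Q_\alpha(\Phi_{k,\alpha,j,\ell})}{(\alpha+2)\|Y_{j,\ell}\|_{L^2(S^{N-1})}^{2}}=\int_0^1\!\bigl[(\psi_k')^2-p|v_\alpha|^{p-1}\psi_k^2\bigr]t^{\sigma_\alpha}dt+\frac{4j(N+j-2)}{(\alpha+2)^2}\int_0^1\!\psi_k^2\,t^{\sigma_\alpha-2}dt,
\]
with $\sigma_\alpha:=1+(2N-4)/(\alpha+2)\to 1$. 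The $C^1$-convergence $v_\alpha\to w$ from Theorem \ref{main-theorem}, the tail estimate on $\psi_k$, and dominated convergence then force, as $\alpha\to\infty$, the first integral to $\lambda_k B_k$ (via the eigenvalue identity for $\psi_k$) and the second to $B_k:=\int_0^1\psi_k^2/t\,dt>0$.

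By construction $4j(N+j-2)/(\alpha+2)^2\le -\lambda_m/2+o(1)$ for every $1\le j\le J_\alpha$, so the limit value above is at most $(\lambda_m/2)B_k<0$, uniformly for $1\le k\le m$. Choosing $\alpha^*$ so large that all the $o(1)$ errors are absorbed into this $|\lambda_m|/2$ margin yields $Q_\alpha(\Phi_{k,\alpha,j,\ell})<0$ for every such triple and every $\alpha\ge\alpha^*$. Linear independence of the $\psi_k$ (distinct eigenvalues) and orthogonality of the $Y_{j,\ell}$ produce $m\cdot N_j$ independent negative directions per mode $j=1,\ldots,J_\alpha$; adding the $m$ directions from $j=0$ proves \eqref{lower-bound:Morse-index}. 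The inequality \eqref{lower-bound:Morse-index2} follows from the same argument restricted to $k\in\{1,\ldots,m-1\}$ and with $\lambda_{m-1}$ in place of $\lambda_m$: the margin becomes $|\lambda_{m-1}|(1-1/\theta)$, which is strictly positive for every $\theta>1$, and this is exactly what is encoded in $K_\alpha(\theta)$.

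The main obstacle is the uniform passage to the limit in the two singular integrals above: the weight $t^{\sigma_\alpha-2}$ tends to the non-integrable $t^{-1}$ at the origin, and the convergence $v_\alpha\to w$ is only $C^1$ up to $t=0$. The asymptotics $\psi_k(t)\sim c_k t^{\sqrt{|\lambda_k|}}$ supply the integrable dominant required for the dominated convergence argument, and the safety factor $2$ in the definition of $J_\alpha$ (respectively the free parameter $\theta>1$ in $K_\alpha(\theta)$) is precisely what is needed to absorb the resulting $o(1)$ remainders into a strictly negative limit, giving $\alpha^*=\alpha^*(p,N,m)$ (respectively $\alpha^\star=\alpha^\star(\theta,p,N,m)$) independent of $\alpha$.
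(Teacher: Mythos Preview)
Your test-function approach is a legitimate alternative to the paper's eigenvalue-based argument, and your quadratic-form computation after the change of variables $t=r^{(\alpha+2)/2}$ is correct. However, there is a real gap in the last step. You show that $Q_\alpha(\Phi_{k,\alpha,j,\ell})<0$ for each $k=1,\ldots,m$ individually and then assert that ``linear independence of the $\psi_k$'' produces $m$ negative directions per $(j,\ell)$. That inference is invalid: a quadratic form can be negative on each of $m$ linearly independent vectors yet fail to be negative definite on their span (e.g.\ $Q(x,y)=-x^2-y^2+3xy$). What you actually need, for each fixed $(j,\ell)$, is that the $m\times m$ matrix
\[
A_{kl}(\alpha,j)=\int_0^1\!\bigl[\psi_k'\psi_l'-p|v_\alpha|^{p-1}\psi_k\psi_l\bigr]t^{\sigma_\alpha}dt
+\frac{4j(N+j-2)}{(\alpha+2)^2}\int_0^1\!\psi_k\psi_l\,t^{\sigma_\alpha-2}dt
\]
is negative definite. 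This does hold, and for the same reasons your diagonal analysis works: by the eigenvalue equation for $\psi_k$ one has $\int_0^1[\psi_k'\psi_l'-p|w|^{p-1}\psi_k\psi_l]\,t\,dt=\lambda_k\int_0^1\psi_k\psi_l\,t^{-1}dt=0$ for $k\neq l$, so the off-diagonal entries tend to $0$ while the diagonal entries tend to $(\lambda_k+c)B_k$ with $c\le -\lambda_m/2$, uniformly in $1\le j\le J_\alpha$ (the $j$-dependence sits only in the bounded coefficient). Hence $A(\alpha,j)$ converges to a diagonal matrix with strictly negative entries and is negative definite for $\alpha$ large. You must include this orthogonality/off-diagonal argument; without it the proof is incomplete.

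For comparison, the paper sidesteps this issue entirely. It first proves (Lemma~\ref{limit-eigenvalues}) that the radial singular eigenvalues satisfy $\bigl(\tfrac{2}{\alpha+2}\bigr)^2\widehat{\Lambda}_{i,\alpha}\to\lambda_i$, via the min--max characterization \eqref{H102-eigen} and the convergence $v_\alpha\to w$. It then invokes the exact decomposition $\widehat{\Lambda}=\widehat{\Lambda}^{rad}+j(N-2+j)$ from \cite{amadori1} and simply counts the pairs $(i,j)$ with $\widehat{\Lambda}_{i,\alpha}+j(N-2+j)<0$. Working at the level of eigenvalues rather than test functions automatically delivers negative-definite subspaces of the right dimension, so no span argument is needed. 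Your route is more hands-on and avoids quoting the min--max machinery of \cite{amadori1}, but the price is the extra orthogonality step above.
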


\begin{remark}\label{rmk1}
Theorem \ref{lower-bounds-N>2} deserves some comments. Comparing to \cite[Theorem 1.1]{amadori}, our contribution here is twofold. Firstly, our result is also valid for positive solutions ($m=1$), while \cite{amadori} gives no new information for positive solutions. Secondly, for nodal solutions, we improve the lower bounds for the Morse indices for large values of $\alpha$. Indeed, it is shown in \cite[Theorem 1.1]{amadori} that
$$
m(u_\alpha) \geq m + (m-1)\sum_{j=1}^{1+\lfloor \frac{\alpha}{2} \rfloor} N_j\quad \forall \alpha> \alpha_p. 
$$
Moreover, from \cite[Eq. (6.11)]{pacella-ianni} (see also \cite[Proposition 3.3]{amadori}), it is known that $-\lambda_{m-1}>1$ and hence for fixed $\theta_0>1$ close enough to $1$, one has that $-\lambda_{m-1}/\theta_0 > 1$ and thus $K_{\alpha}(\theta_0) > 1 + \left\lfloor\frac{\alpha}{2}\right\rfloor$ for all $\alpha$ large enough. Theorem \ref{lower-bounds-N>2} also presents an improvement on the results \cite[Theorem 1.1 i)]{weth}, once we have an explicit lower bound. Finally, for nodal solutions and large values of $\alpha$, it is clear that the lower bound in \eqref{lower-bound:Morse-index2} is much better than that in \eqref{lower-bound:Morse-index}, where the latter is also valid for positive solutions ($m=1$).
\end{remark}

In \cite{wendel} we proved that, in dimension $N=2$, the Morse index of the radial solutions of \eqref{alpha} with the same number of nodal sets is monotone nondecreasing with respect to $\alpha$. The case of $N=2$ is special, we took advantage of a change of variables that relates  Hénon equations with different weights. Although such transformation is not available for dimensions higher than two, here we prove a similar monotonicity result for $N\geq3$ and large values of $\alpha$.

 \begin{theorem}\label{monotonicity-N>2}
Let $p>1$ and $m \in \mathbb{N}$.  Let $u_\alpha$ and $u_\beta$ be radial solutions of \eqref{alpha} and $(P_\beta)$, respectively, with $m$ nodal sets. Then exists $\alpha_* >\alpha_p$ such that
$$
m(u_\alpha) \leq m(u_\beta),\quad \forall \alpha, \beta \in [\alpha_*,\infty),\ \alpha< \beta. 
$$
\end{theorem}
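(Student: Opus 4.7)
My plan is to decompose the Morse index by spherical harmonics and reduce the monotonicity to a one-parameter spectral comparison powered by Theorem~\ref{main-theorem}. By the standard separation of variables,
$m(u_\alpha) = \sum_{j=0}^{\infty} N_j\, m_j(u_\alpha)$,
where $m_j(u_\alpha)$ is the number of negative eigenvalues of the singular Sturm--Liouville problem
\[
L_{j,\alpha}\psi := -\psi'' - \tfrac{N-1}{r}\psi' + \tfrac{j(N+j-2)}{r^2}\psi - p\, r^\alpha|u_\alpha|^{p-1}\psi = \mu \psi \ \ \text{in}\ (0,1),\quad \psi(1)=0,
\]
with the standard regularity condition at $r=0$. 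The sum is finite because for $j$ large compared to $\alpha$ the centrifugal barrier kills the negative spectrum.

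Next I would apply the substitution $r=t^{2/(\alpha+2)}$ from Theorem~\ref{main-theorem}. A direct calculation (analogous to the derivation of the limit equation for $v_\alpha$) transforms $L_{j,\alpha}\psi=\mu\psi$ into
\[
-\tilde\psi''-\tfrac{M_\alpha}{t}\tilde\psi'+\tfrac{c_{j,\alpha}}{t^2}\tilde\psi - p|v_\alpha|^{p-1}\tilde\psi = \mu\, \gamma_\alpha^2\, t^{2\gamma_\alpha-2}\tilde\psi,
\]
with $\gamma_\alpha := 2/(\alpha+2)$, $M_\alpha := 1+\gamma_\alpha(N-2)$, $c_{j,\alpha}:=4j(N+j-2)/(\alpha+2)^2$, and $\tilde\psi(t):=\psi(r)$. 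This transformation preserves the sign of the spectrum, so $m_j(u_\alpha)$ equals the number of negative eigenvalues of the transformed operator. The two structural features that drive the proof are: (a) $c_{j,\alpha}$ is \emph{strictly decreasing in $\alpha$} for each fixed $j\geq 1$; (b) by Theorem~\ref{main-theorem}, the remaining $\alpha$-dependence encoded in $M_\alpha$ and $|v_\alpha|^{p-1}$ is $o(1)$ in $C^0([0,1])$ uniformly in $j$, while $\gamma_\alpha^2 t^{2\gamma_\alpha-2}$ depends on $\alpha$ alone.

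These two facts should yield a min--max comparison implying monotonicity of $\alpha\mapsto m_j(u_\alpha)$ on $[\alpha_*,\infty)$ for each $j$. Letting $\alpha\to\infty$ while keeping $c_{j,\alpha}\to c$ fixed (i.e., $j\sim c^{1/2}(\alpha+2)/2$), the transformed problem converges to a radial eigenvalue problem for the singular operator $-\Delta - p|w|^{p-1} - \lambda/|y|^2$ on $B^2$, whose negative eigenvalues are exactly $\lambda_1<\dots<\lambda_m$. A standard spectral continuity argument then gives an asymptotic characterization of the form
\[
m_j(u_\alpha) = \#\{k\in\{1,\dots,m\}:\ c_{j,\alpha} < T_k + o(1)\}\quad\text{as}\ \alpha\to\infty,
\]
for explicit thresholds $T_1>\dots>T_m>0$ determined by $|\lambda_1|,\dots,|\lambda_m|$. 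Because $c_{j,\alpha}$ decreases in $\alpha$, the right-hand side is nondecreasing in $\alpha$ once the $o(1)$ error is absorbed, and summing $\sum_j N_j m_j(u_\alpha)$ over the finitely many active modes $j$ gives the conclusion.

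The main obstacle is uniformity in $j$ throughout the active range $j\lesssim\alpha$: the consecutive gaps $c_{j+1,\alpha}-c_{j,\alpha}$ shrink like $1/\alpha$, so the $o(1)$ perturbation induced by $v_\alpha-w$ must be shown to be strictly smaller than the minimum distance between the thresholds $T_k$ and the finitely many critical values $c_{j,\alpha}$ lying in the relevant window. This reduces to a finite compatibility condition that can be enforced by choosing $\alpha_*$ sufficiently large, using that the $\lambda_k$ are simple \cite[Proposition 1.1]{amadori1}.
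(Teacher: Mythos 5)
Your decomposition $m(u_\alpha)=\sum_j N_j\,m_j(u_\alpha)$, the rescaling $r=t^{2/(\alpha+2)}$, and the identification of $c_{j,\alpha}=4j(N-2+j)/(\alpha+2)^2$ as the effective parameter are all consonant with the paper's setup (which instead works with the singular eigenvalue problem $L^\alpha\varphi=\widehat\Lambda\,\varphi/|x|^2$ and the decomposition $\widehat\Lambda=\widehat\Lambda^{rad}_{i,\alpha}+j(N-2+j)$, yielding exactly the thresholds you call $T_k$, essentially $-\lambda_k$). The genuine gap is in your closing step. An asymptotic characterization of the form
\[
m_j(u_\alpha)=\#\{k : c_{j,\alpha}<T_k+E_{j,\alpha}\},\qquad E_{j,\alpha}=o(1),
\]
does not imply monotonicity of $m_j(\cdot)$, no matter how large you take $\alpha_*$ or how carefully you avoid resonances. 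For each fixed $k$ there will always be, at every large $\alpha$, some $j$ with $c_{j,\alpha}$ arbitrarily close to $T_k$ (the gaps $c_{j+1,\alpha}-c_{j,\alpha}$ scale like $j/\alpha^2\sim 1/\alpha$ in the active window $j\lesssim\alpha$), so you cannot separate the critical values from the thresholds by choosing $\alpha_*$ large. More fundamentally, the error $E_{j,\alpha}$ has no sign and no derivative control in your argument; if it decreases faster than $c_{j,\alpha}$ near a crossing, $m_j$ could drop as $\alpha$ increases. Absorbing a sign-indefinite $o(1)$ term into a strict inequality is exactly what fails.

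What is missing, and what the paper supplies, is the derivative control. The paper uses the singular eigenvalue problem and the decomposition from \cite{amadori1}, together with the result from \cite{weth-bifurcation} that the radial eigenvalue curves $\alpha\mapsto\widehat\Lambda_{i,\alpha}$ are $C^1$ in $\alpha$ and admit the expansion $\widehat\Lambda_i'(\alpha)=\frac{\lambda_i}{2}\alpha+c_i+o(1)$ (Proposition \ref{expansion}). Since $\lambda_i<0$, this tends to $-\infty$, so there exists $\alpha_*$ past which every $\widehat\Lambda_{i,\alpha}$ is strictly decreasing. Because the angular summands $j(N-2+j)$ are $\alpha$-independent, a strictly decreasing $\widehat\Lambda_{i,\alpha}$ makes each count $\#\{(i,j):\widehat\Lambda_{i,\alpha}+j(N-2+j)<0\}$ nondecreasing in $\alpha$, and summing multiplicities $N_j$ gives the theorem. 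To repair your argument you would have to replace the level-set count for $m_j(u_\alpha)$ by a monotonicity statement for the radial eigenvalue curves themselves, which is precisely the content of Proposition \ref{expansion}, and which cannot be obtained from a pointwise $o(1)$ bound alone.
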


Next, we present a result on the asymptotic concentration of zeros and blow up of radial solutions of \eqref{alpha}, as $\alpha \to \infty$. If $m= 1$, then $x=0$ is the unique local extremum of $u_{\alpha}$. If $m\geq 2$, denote by $r_{1,\alpha}<\cdots<r_{m,\alpha}=1$ the zeros of $u_\alpha$ in $(0,1]$ and set
 $$
 \mathcal{M}_{1,\alpha} := \max\{u_\alpha(r): 0\leq r \leq r_{1,\alpha}\},\quad \mathcal{M}_{i+1,\alpha} := \max\{|u_\alpha(r)|: r_{i,\alpha}\leq r \leq r_{i+1,\alpha}\}
 $$
 for $i=1,\cdots,m-1$.

\begin{theorem}\label{zeros-blowup} For $m\in \N$ and $p>1$ fixed, let $u_\alpha$ and $w$ be the radial solutions of \eqref{alpha} and \eqref{problema-limite}, respectively, with $m$ nodal sets such that $u_\alpha(0)>0$ and $w(0)>0$. 
\begin{enumerate}[(i)]
\item For each $i=1,\cdots,m$,
$$
\alpha(1-r_{i,\alpha}) \to -2\log(t_i)\quad \text{as}\ \alpha\to\infty,
$$
where $r_{1, \alpha} < \ldots< r_{m-1, \alpha} < r_{m, \alpha}=1$ and $t_1 < \ldots< t_{m-1} < t_m=1$ are the zeros of $u_{\alpha}$ and $w$, respectively.  
\item  Let $0<R_0<1$. Then
$$
\left(\frac{2}{\alpha + 2}\right)^\frac{2}{p-1}u_{\alpha}(x) \to w(0)
$$
uniformly in $B(0,R_0) = \{ x \in \mathbb{R}^N; |x|< R_0\}$ as $\alpha \to\infty$.  

\item There exist $\alpha_0 > \alpha_p$ and constants $C_1,C_2>0$ such that
$$
C_1\left(\frac{\alpha+2}{2}\right)^\frac{2}{p-1} \leq \mathcal{M}_{i,\alpha}\leq C_2 \left(\frac{\alpha+2}{2}\right)^\frac{2}{p-1}
$$
for all $i=1,\cdots,m$ and $\alpha\geq \alpha_0$. 
\end{enumerate}
\end{theorem}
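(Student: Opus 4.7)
The plan is to reduce all three statements to Theorem~\ref{main-theorem} via the change of variables $r = t^{2/(\alpha+2)}$. For each $\alpha > 0$, this map is an order-preserving homeomorphism of $[0,1]$, so zeros and local extrema of $u_\alpha$ are in bijective correspondence with those of $v_\alpha$, and values transform as $u_\alpha(r) = \bigl((\alpha+2)/2\bigr)^{2/(p-1)} v_\alpha(r^{(\alpha+2)/2})$. Part (ii) follows most directly: for $|x|\le R_0<1$ the argument $|x|^{(\alpha+2)/2}\le R_0^{(\alpha+2)/2}$ tends to $0$ uniformly in $x$, and together with the $C^0$-convergence $v_\alpha\to w$ on $[0,1]$ and continuity of $w$ at $0$ a triangle inequality yields
$$
\bigl|(2/(\alpha+2))^{2/(p-1)} u_\alpha(x) - w(0)\bigr| \le \|v_\alpha - w\|_\infty + \sup_{s\in[0, R_0^{(\alpha+2)/2}]}|w(s) - w(0)|,
$$
which vanishes uniformly in $x\in B(0,R_0)$ as $\alpha\to\infty$.

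For part (i), I first note that the zeros $0<t_1<\cdots<t_{m-1}<t_m=1$ of $w$ are simple: each $t_i>0$ is a regular point of the ODE satisfied by $w$, so $w(t_i)=w'(t_i)=0$ would give $w\equiv 0$, a contradiction. Since $v_\alpha$ has exactly $m$ zeros in $(0,1]$ (matching $u_\alpha$'s count) and $v_\alpha\to w$ in $C^1([0,1])$, a standard sign-change argument forces $t_{i,\alpha}:=r_{i,\alpha}^{(\alpha+2)/2}\to t_i$ for each $i$. Writing $r_{i,\alpha}=\exp\!\bigl(\tfrac{2}{\alpha+2}\log t_{i,\alpha}\bigr)$ and Taylor-expanding the exponential yields
$$
\alpha(1-r_{i,\alpha}) = -\frac{2\alpha}{\alpha+2}\log t_{i,\alpha} + O(1/\alpha) \to -2\log t_i,
$$
which is the claimed limit (the case $i=m$ being trivial since $t_{m,\alpha}=1$).

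For part (iii), I observe that $w$ has exactly $m$ local extrema $0=\sigma_1<\sigma_2<\cdots<\sigma_m<1$, one per nodal set (a standard consequence of the Lane--Emden ODE, which forces any interior critical point inside a nodal set to be a non-degenerate extremum of the sign of $w$ there, ruling out two of them in the same nodal set). Each $|w(\sigma_i)|>0$ since $\sigma_i$ lies in the interior of a nodal set, and $w''(\sigma_i)\ne 0$ follows from the Lane--Emden ODE (from $w''(\sigma_i)=-|w(\sigma_i)|^{p-1}w(\sigma_i)$ for $i\ge 2$, and from a Taylor expansion at the origin for $i=1$). Combining this non-degeneracy with the $C^1$-convergence $v_\alpha\to w$ and the matching count of critical points gives $v_\alpha(\sigma_{i,\alpha})\to w(\sigma_i)$ for the bijectively matched extrema $\sigma_{i,\alpha}$ of $v_\alpha$. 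Since $\mathcal{M}_{i,\alpha}=\bigl((\alpha+2)/2\bigr)^{2/(p-1)}|v_\alpha(\sigma_{i,\alpha})|$, choosing $C_1=\tfrac12\min_i|w(\sigma_i)|$, $C_2=2\max_i|w(\sigma_i)|$ and $\alpha_0$ large enough delivers the two-sided bound for all $i=1,\ldots,m$ simultaneously.

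The main obstacle I anticipate is not in any single item but in the bookkeeping needed to guarantee bijectivity: one must verify that the $m$ zeros and $m$ extrema of $v_\alpha$ neither collapse, disappear, nor proliferate in the limit. This is where the simplicity of the zeros and non-degeneracy of the critical points of $w$ (both consequences of the Lane--Emden ODE) come together with the matching count $m$ (built into the hypothesis on both $u_\alpha$ and $w$) to upgrade $C^1$-convergence into the precise asymptotic information asserted by (i)--(iii).
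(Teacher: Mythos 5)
Your arguments for parts (i) and (ii) are correct, and your proof of (ii) is actually cleaner than the paper's: the paper first proves pointwise convergence at each $x$ and then upgrades to uniform convergence on $B(0,R_0)$ via a monotonicity argument (using $u_\alpha'<0$ on $[0,R_0]$ for large $\alpha$), whereas your triangle-inequality bound using $\|v_\alpha - w\|_\infty$ and the modulus of continuity of $w$ at $0$ gets the uniform statement directly. Part (i) is essentially the same computation as the paper's.

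Part (iii), however, has a genuine logical gap in the form of a circularity: you derive (iii) from the $C^1([0,1])$-convergence $v_\alpha\to w$ of Theorem~\ref{main-theorem}, but the paper's proof of Theorem~\ref{main-theorem} explicitly \emph{invokes} (iii). Concretely, Theorem~\ref{main-theorem} is proved in Section~\ref{section:asymptotic-behavior} \emph{after} (iii), and the uniform lower bound $\mathcal{N}_{i,\alpha}\ge C_1>0$ from (iii) is what rules out, in that proof, both the collapse of consecutive zeros ($t_i=t_{i+1}$ would force $\|v_\alpha'\|_\infty\to\infty$) and the vanishing of the limit $v$ on some nodal interval (so that $v$ again has $m$ nodal sets and must equal $w$). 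So you cannot take Theorem~\ref{main-theorem} as a black box here. The paper instead proves (iii) independently in Section~\ref{sec:local}: the upper bound comes from Lemma~\ref{limitacao-por-cima} together with the ordering $\mathcal{N}_{1,\alpha}>\cdots>\mathcal{N}_{m,\alpha}$ (a cited lemma of Amadori--Gladiali), and the lower bound comes from testing the ODE for $v_\alpha$ on the last nodal interval $(t_\alpha,1)$, integrating by parts, and applying Cauchy--Schwarz, with $t_\alpha\ge\delta>0$ guaranteed by Lemma~\ref{t_1,alpha}. This yields $\mathcal{N}_{m,\alpha}\ge C^{2/(1-p)}$, which then propagates to all $i$ by the ordering. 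To repair your proposal you would need to supply an independent proof of (iii) (for instance, the paper's energy/ODE argument) before appealing to Theorem~\ref{main-theorem} for (i) and (ii).
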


\begin{remark} We make some comments on Theorem \ref{zeros-blowup}.
\begin{enumerate}[(i)]
\item Regarding Theorem \ref{zeros-blowup}(i), in the case with $\alpha>0$ fixed and $p \nearrow 2^*_{\alpha}-1$, a different phenomenon occurs. In \cite{amadori2}, the authors studied the asymptotic behavior of radial solutions of \eqref{alpha} for $\alpha > 0$ fixed and $p \nearrow 2^*_{\alpha}-1$. In this case, if $r_{1,p} < \ldots < r_{m,p}=1$ are the zeros of $u_p$, the unique radial nodal solutions of \eqref{alpha} with $m$ nodal sets and $u_p(0)>0$,  then $r_{i,p}\to 0$ as $p\nearrow 2^*_\alpha-1$, for all $i=1,\ldots,m-1$; see \cite[Theorem 1.2]{amadori2}.

\item Let $\omega_{\alpha}$ be the least energy solution of the H\'enon equation \eqref{alpha} and $R_0 \in (0,1)$. It was proved in \cite[Eq. (32)]{BW2} that
$$
\left(\frac{2}{\alpha + 2}\right)^\frac{2}{p-1}\omega_{\alpha}(x) \to 0
$$
uniformly in $B(0,R_0)$, as $\alpha \to\infty$. According to Theorem \ref{zeros-blowup}(ii), this is in contrast with the radial, either nodal or positive, solutions of \eqref{alpha}.
\end{enumerate}
\end{remark}

For $p> 1$ and $\alpha>\alpha_p$, denote by $S_{\alpha,p}^{R}$ the best constant of the embedding $H^1_{0,rad}(B^N) \subset L^{p+1}(B^N, |x|^\alpha)$, that is
$$
S_{\alpha,p}^{R} := \inf_{0\neq u\in H^1_{0,rad}(B^N)} \displaystyle{\frac{\int_{B^N} |\nabla u|^2}{\left(\int_{B^N} |x|^\alpha |u|^{p+1}\right)^\frac{2}{p+1}}}. 
$$
Actually, the condition $\alpha > \alpha_p$ implies that the above embedding is compact, and hence $S_{\alpha,p}^{R}$ is achieved by a positive radial function $u_\alpha$, which is also, up to scaling, the positive radial solution of \eqref{alpha}. It is proved in \cite[Theorem 4.1]{smets-su-willem} that
$$
\lim_{\alpha\to\infty}\left(\frac{N}{\alpha + N}\right)^\frac{p+3}{p+1}S_{\alpha,p}^{R} = C \in (0,\infty),
$$
Here, we give a precise characterization for the constant $C$ above.  Let $S_p$ be the best constant of the embedding $H^1_0(B^2) \subset L^{p+1}(B^2)$, namely
$$
S_{p} := \inf_{0\neq v\in H^1_{0}(B^2)}\displaystyle{\frac{\int_{B^2} |\nabla v|^2}{\left(\int_{B^2} |v|^{p+1}\right)^\frac{2}{p+1}}}. 
$$

\begin{theorem}\label{theorem-constants}
For any $p>1$,
$$
\lim_{\alpha\to\infty}\left(\frac{2}{\alpha + 2}\right)^\frac{p+3}{p+1}S_{\alpha,p}^{R} = \left(\frac{\omega_{N-1}}{2\pi}\right)^\frac{p-1}{p+1} S_p.
$$
\end{theorem}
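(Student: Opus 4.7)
The plan is to evaluate $S_{\alpha,p}^{R}$ on the positive radial solution $u_\alpha$ of \eqref{alpha} (the case $m=1$) and then pass to the limit through the rescaling \eqref{v_alpha}. Since the Rayleigh quotient defining $S_{\alpha,p}^{R}$ is scale invariant, and $u_\alpha$ is (up to a scalar) both the minimizer and a solution of \eqref{alpha}, testing the equation against $u_\alpha$ yields $\int_{B^N}|\nabla u_\alpha|^2=\int_{B^N}|x|^\alpha u_\alpha^{p+1}$, and hence
\[
S_{\alpha,p}^{R} \;=\; \left(\int_{B^N} |x|^\alpha u_\alpha^{p+1}\right)^{\frac{p-1}{p+1}}.
\]

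I would then express this integral in radial coordinates and use \eqref{v_alpha}, namely $u_\alpha(r)=\left(\frac{\alpha+2}{2}\right)^{2/(p-1)} v_\alpha(r^{(\alpha+2)/2})$, together with the substitution $t=r^{(\alpha+2)/2}$, to obtain
\[
\int_{B^N} |x|^\alpha u_\alpha^{p+1}\,dx \;=\; \omega_{N-1}\left(\frac{\alpha+2}{2}\right)^{\frac{p+3}{p-1}} \int_0^1 t^{\frac{\alpha+2N-2}{\alpha+2}}\, v_\alpha(t)^{p+1}\,dt.
\]
Theorem \ref{main-theorem} applied with $m=1$ gives $v_\alpha\to w$ uniformly on $[0,1]$, where $w$ is the positive radial solution of \eqref{problema-limite}. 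The exponent $(\alpha+2N-2)/(\alpha+2)$ is at least $1$ (when $N\geq 2$) and converges to $1$, so $t^{(\alpha+2N-2)/(\alpha+2)}\to t$ pointwise on $[0,1]$ and is dominated by $1$. Dominated convergence then produces
\[
\int_0^1 t^{\frac{\alpha+2N-2}{\alpha+2}}\, v_\alpha(t)^{p+1}\,dt \ \longrightarrow\ \int_0^1 t\, w(t)^{p+1}\,dt \;=\; \frac{1}{2\pi}\int_{B^2} w^{p+1}.
\]

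Finally, to identify $\int_{B^2} w^{p+1}$: testing \eqref{problema-limite} against $w$ gives $\int_{B^2}|\nabla w|^2=\int_{B^2}w^{p+1}$; Schwarz symmetrization shows that $S_p$ is attained by a nonnegative radial decreasing function, and uniqueness of the positive solution of \eqref{problema-limite} forces such a minimizer to be a positive scalar multiple of $w$. Consequently $S_p=\bigl(\int_{B^2}w^{p+1}\bigr)^{(p-1)/(p+1)}$, that is, $\int_{B^2}w^{p+1}=S_p^{(p+1)/(p-1)}$. Combining the three displays above and multiplying by $\bigl(2/(\alpha+2)\bigr)^{(p+3)/(p+1)}$ yields the claimed limit. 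The only delicate point is the exponent arithmetic producing $(\alpha+2)/2$ to the power $(p+3)/(p-1)$ in the change of variables; with that in hand, the convergence is a direct consequence of Theorem \ref{main-theorem} and the characterization of $S_p$ via $w$.
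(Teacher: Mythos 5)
Your proof is correct and follows essentially the same route as the paper's: evaluate the Rayleigh quotient at the positive radial solution, change variables via \eqref{v_alpha}, and pass to the limit using Theorem \ref{main-theorem}. The only (minor) difference is that you compute $\int_{B^N}|x|^\alpha u_\alpha^{p+1}$ while the paper computes $\int_{B^N}|\nabla u_\alpha|^2$ via \eqref{relacao-grad-uv}; since these are equal on the Nehari manifold, the two are interchangeable, and your variant has the small advantage of needing only the uniform $C^0$ convergence $v_\alpha\to w$ rather than the $C^1$ convergence.
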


The rest of this paper is organized as follows. In Section \ref{sec:energy} we prove sharp estimates for the energy of the solution $u_{\alpha}$, which are used in Section \ref{sec:norm} to prove the asymptotic behavior of $\|u_{\alpha}\|_{\infty}$, as $\alpha\to \infty$. Section \ref{sec:local} is devoted to estimates on the local extrema of $u_{\alpha}$, in particular the proof of Theorem \ref{zeros-blowup} (iii). In Section \ref{section:asymptotic-behavior} we prove Theorems \ref{main-theorem} and \ref{theorem-constants} and conclude the proof of Theorem \ref{zeros-blowup}. Finally, Section \ref{sec:spec} is devoted to the analysis of the asymptotic distribution for the spectrum of some linearized operators and proofs of Theorems \ref{lower-bounds-N>2} and \ref{monotonicity-N>2}.

\section{Energy levels and their asymptotic estimates}\label{sec:energy}

Set
$$
\varphi_\alpha(u) := \frac{1}{2}\int_{B^N} |\nabla u|^2 dx - \frac{1}{p+1}\int_{B^N} |x|^\alpha |u|^{p+1} dx.
$$
Since the embedding $H^1_{0,rad}(B^N)\subset L^{p+1}(B^N, |x|^\alpha)$ is compact for all $1<p< 2^*_\alpha-1$, $\varphi_\alpha$ is well defined in $H^1_{0,rad}(B^N)$ and the radial solutions of \eqref{alpha} are exactly the critical points of $\varphi_\alpha$. In particular, these solutions lie on the Nehari manifold
$$
\mathcal{N}^\alpha := \left\{0\neq u \in H^1_{0,rad}(B^N): \varphi_\alpha'(u)u=0\right\} = \left\{0\neq u \in H^1_{0,rad}(B^N): \int_{B^N} |\nabla u|^2 = \int_{B^N} |x|^\alpha |u|^{p+1}\right\}. 
$$

We set
$$
H_m := \{u \in H^1_{0,rad}(B^N): \ u\ \text{has precisely}\ m\ \text{nodal sets}\}. 
$$
For $u\in H_m$, let $R_1, \cdots, R_m$ be the nodal regions of $u$. We introduce the function $u_i$ defined by
$$
u_i(x) :=
\left\{ 
\begin{array}{l}
\begin{aligned}
&u(x)\ &\text{if}&\ x\in R_i\\
 &0\   &\text{if}&\ x\in B^N\backslash R_i. 
\end{aligned}
\end{array}
\right.
$$

Since $u_\alpha$ and $-u_{\alpha}$ are the radial solutions of \eqref{alpha} with precisely $m$ nodal sets, we can define the level of radial solutions in $H_m$ as
$$
{C}_{\alpha,m} := \varphi_\alpha (u_\alpha). 
$$
We also define
$$
\overline{{C}}_{\alpha,m} := \inf \{\varphi_\alpha (u): u\in H_m\ \text{and}\ u_i \in \mathcal{N}^\alpha\ \forall i=1,\cdots, m\}
$$
and
$$
\widetilde{{C}}_{\alpha,m} := \inf_{u\in H_m} \max_{t\in \R^{m}_+} \sum_{i=1}^m \varphi_\alpha(t_i u_i),
$$
where here $\R^m_+ := \{t=(t_1,\cdots,t_m) \in \R^m: t_i>0\ \forall i=1,\cdots,m\}$.  

\begin{proposition}\label{levels}
${C}_{\alpha,m} = \overline{{C}}_{\alpha,m} = \widetilde{{C}}_{\alpha,m}$. 
\end{proposition}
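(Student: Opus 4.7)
The plan is to prove $\widetilde{C}_{\alpha,m} = \overline{C}_{\alpha,m}$ by a Nehari fibration argument, then $\overline{C}_{\alpha,m} = C_{\alpha,m}$ by reducing to a finite-dimensional optimization over the interior nodal radii, combined with Nagasaki's uniqueness theorem for radial solutions on annuli. The easy inequality $\overline{C}_{\alpha,m} \leq C_{\alpha,m}$ is immediate: testing \eqref{alpha} against $u_{\alpha,i}$, which vanishes on $\partial R_i$, yields $u_{\alpha,i} \in \mathcal{N}^\alpha$, so $u_\alpha$ is admissible for $\overline{C}_{\alpha,m}$.

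For $\widetilde{C}_{\alpha,m} = \overline{C}_{\alpha,m}$, observe that for any $u \in H_m$ the map $t \mapsto \sum_i \varphi_\alpha(t_i u_i)$ decouples into single-variable functions $t_i \mapsto \frac{t_i^2}{2}\|\nabla u_i\|_2^2 - \frac{t_i^{p+1}}{p+1}\int_{B^N} |x|^\alpha |u_i|^{p+1}$. Since $p>1$, each such factor admits a unique positive maximizer
$$
t_i^\ast \;=\; \left( \frac{\|\nabla u_i\|_2^2}{\int_{B^N}|x|^\alpha|u_i|^{p+1}} \right)^{1/(p-1)},
$$
and $t_i^\ast u_i \in \mathcal{N}^\alpha$. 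Setting $\widetilde u := \sum_i t_i^\ast u_i \in H_m$, its components are Nehari-admissible, and $\varphi_\alpha(\widetilde u) = \sum_i \varphi_\alpha(t_i^\ast u_i)$ coincides with the maximum, giving $\widetilde{C}_{\alpha,m} \geq \overline{C}_{\alpha,m}$. Conversely, whenever $u$ is admissible for $\overline{C}_{\alpha,m}$, one has $t_i^\ast = 1$ for every $i$, so $\max_{t\in\R^m_+}\sum_i \varphi_\alpha(t_i u_i) = \varphi_\alpha(u)$, giving the other inequality.

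For $\overline{C}_{\alpha,m} \geq C_{\alpha,m}$ I would reduce to a finite-dimensional minimization. Given a partition $0 = \rho_0 < \rho_1 < \cdots < \rho_m = 1$, Nagasaki's uniqueness applied to each annulus $A_i := \{ \rho_{i-1} < |x| < \rho_i \}$ furnishes a unique positive radial Dirichlet solution $w_i$; standard Nehari theory on $A_i$ then gives $\inf_{v\in\mathcal{N}^\alpha\cap H^1_{0,rad}(A_i)} \varphi_\alpha(v) = \varphi_\alpha(w_i) =: E(\rho_{i-1},\rho_i)$. Since signs on consecutive annuli are independent and flipping sign preserves energy, these local infima add up, and
$$
\overline{C}_{\alpha,m} \;=\; \inf_{0=\rho_0<\rho_1<\cdots<\rho_m=1} \sum_{i=1}^m E(\rho_{i-1},\rho_i).
$$
Because $E(a,b) \to \infty$ as $b - a \to 0^+$ (thin annuli force $\|\nabla w\|_2 \to \infty$ on $\mathcal{N}^\alpha$, via degeneration of the weighted Sobolev constant), the infimum is attained at an interior configuration $(\rho_1^\ast,\dots,\rho_{m-1}^\ast)$. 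The first-order conditions $\partial_{\rho_i}(E(\rho_{i-1}^\ast,\rho_i^\ast) + E(\rho_i^\ast,\rho_{i+1}^\ast)) = 0$ translate, by a Hadamard-type shape-derivative computation, into $\partial_r w_i(\rho_i^\ast) = \partial_r w_{i+1}(\rho_i^\ast)$. Hence the piecewise function glued from the $w_i$ with alternating signs is $C^1$ on $[0,1]$ and classical in each open annulus; by elliptic regularity it is a $C^2$ radial solution of \eqref{alpha} with exactly $m$ nodal sets, which Nagasaki's uniqueness identifies as $\pm u_\alpha$. This yields $\overline{C}_{\alpha,m} = \varphi_\alpha(u_\alpha) = C_{\alpha,m}$.

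The main obstacle is the Hadamard-type shape derivative, which converts the finite-dimensional Euler--Lagrange conditions into the $C^1$-matching of adjacent radial solutions across each interior zero; a secondary technical point is the boundary blow-up $E(a,b) \to \infty$ as $b-a \to 0^+$, needed to confine the minimizing partition to the interior of the parameter simplex so that the first-order conditions actually hold.
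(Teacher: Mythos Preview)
Your argument is essentially the same as the paper's. The fibration for $\widetilde{C}_{\alpha,m}=\overline{C}_{\alpha,m}$ is identical, and your partition-minimization-and-gluing for $\overline{C}_{\alpha,m}=C_{\alpha,m}$ is precisely the Nehari-method construction that the paper sets up \emph{before} the proposition as the auxiliary level $\widehat{C}_{\alpha,m}$, citing Willem's book for existence of the optimal partition and the $C^1$-matching across interior nodes; with $C_{\alpha,m}=\widehat{C}_{\alpha,m}$ already in hand, the paper's formal proof then collapses to a two-line comparison $\overline{C}_{\alpha,m}\geq\widehat{C}_{\alpha,m}$. One caveat: Nagasaki's result as cited concerns the ball, not annuli, and in any case you do not need annular uniqueness---existence and positivity of the radial Nehari minimizer on each $A_i$ (plus its smooth dependence on the endpoints, or else the one-sided comparison argument in Willem's Lemma~4.7) is all the gluing step requires.
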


To prove this proposition, we recall the procedure to produce a radial solution of \eqref{alpha} with $m$ nodal sets, provided $\alpha>\alpha_p$. The compactness of the embedding $H^1_{0,rad}(B^N)\subset L^{p+1}(B^N, |x|^\alpha)$ implies that the infimum of $\varphi_\alpha$ on $\mathcal{N}^\alpha$ is achieved at the positive radial solution of \eqref{alpha}. When $m\geq 2$, a radial solution of \eqref{alpha} with $m$ nodal sets can be obtained by the Nehari method; \cite[Chapter 4]{willem}. For that, we introduce the spaces
$$
\begin{aligned}
X_{s,t} &:= \{u \in H^1_{0,rad}(B^N): u(r)=0\ \forall r\in [0, s]\cup [t,1]\},\quad &0<s<t\leq 1,\\
X_{0,t} &:= \{u \in H^1_{0,rad}(B^N): u(r)=0\ \forall r\in [t, 1]\},\quad &0<t< 1,
\end{aligned}
$$
and the Nehari sets
$$
\mathcal{N}^\alpha_{s,t} := \left\{0\neq u \in X_{s,t}: \int_{B^N} |\nabla u|^2 dx = \int_{B^N} |x|^{\alpha} |u|^{p+1} dx\right\}
$$
for $0 \leq s < t \leq 1$, and solve the minimization problem
\begin{equation}\label{C_m}
\widehat{{C}}_{\alpha,m} := \inf \left\{\sum_{i=1}^m \min_{\mathcal{N}^\alpha(t_{i-1},t_i)} \varphi_{\alpha}: 0=t_0 < t_1<\cdots < t_m=1\right\}. 
\end{equation}
Arguing as in \cite[Proposition 4.4-(d)]{willem}, $\widehat{{C}}_{\alpha,m}$ is achieved, say at $0=r_0<r_1<\cdots<r_m=1$. Moreover, for each $i=1,\cdots,m$, there exists a nonnegative function $u_i\in X_{r_{i-1},r_i}$ such that
$$
\varphi_\alpha(u_i) = \min_{\mathcal{N}^\alpha_{r_{i-1},r_i}} \varphi_{\alpha}. 
$$
It can then be shown, like in \cite[Lemma 4.7]{willem}, that the function $u:B^N\to \R$ given by $u(x) = (-1)^{i-1} u_i(x)$ if $|x|\in [r_{i-1},r_i)$ is of class $C^ 2$ and therefore it is the radial solution of \eqref{alpha} with exactly $m$ nodal sets such that $u(0)>0$. Consequently, $u=u_\alpha$ and
\begin{equation}\label{levels2}
{C}_{\alpha,m}=\varphi_\alpha(u_\alpha) = \widehat{{C}}_{\alpha,m}. 
\end{equation}

Now we can prove Proposition \ref{levels}. 

\begin{proof}[\textbf{Proof of Proposition \ref{levels}}]
Since $u_\alpha \in H_m$ and the restrictions of $u_\alpha$ to its nodal regions lie on $\mathcal{N}^\alpha$, we have $\overline{{C}}_{\alpha,m} \leq {C}_{\alpha,m}$. By contradiction, suppose $\overline{{C}}_{\alpha,m} < {C}_{\alpha,m}$. Then, by \eqref{levels2}, there exists $u \in H_m$ with $u_i \in \mathcal{N}^\alpha$ for $i=1,\cdots,m$ such that $\varphi_\alpha(u) < \widehat{{C}}_{\alpha,m}$. Hence, if $s_1<\cdots<s_m=1$ are the zeros of $u$ in $(0,1]$ and $0=s_0$, then
$$
\widehat{{C}}_{\alpha,m} > \varphi_\alpha(u) = \sum_{i=1}^m \varphi_\alpha(u_i) \geq \sum_{i=1}^m \min_{\mathcal{N}^\alpha(s_{i-1},s_i)} \varphi_{\alpha} \geq \widehat{{C}}_{\alpha,m},
$$
which is a contradiction. 

To check the second equality, it suffices to observe that $\displaystyle\max_{t\in \R^{m}_+} \displaystyle\sum_{i=1}^m \varphi_\alpha(t_i u_i)$ is achieved at a unique point $t\in \R^m_+$ such that $t_i u_i \in \mathcal{N}^\alpha$ for all $i=1,\cdots,m$; see \cite[Eq. (4.1)]{willem}. 
\end{proof}

\begin{remark}
Let $u\in H_m$ and $t\in \R^m_+$ such that $t_i u_i \in \mathcal{N}^\alpha$ for all $i=1,\cdots,m$. Then 
$$
t_i = \left(\frac{\int_{B^N} |\nabla u_i|^2}{\int_{B^N} |x|^\alpha|u_i|^{p+1}}\right)^\frac{1}{p-1}\quad \forall i=1,\cdots,m,
$$
and hence
$$
\varphi_\alpha(t_i u_i) = \left(\frac{1}{2} - \frac{1}{p+1}\right) t_i^2 \int_{B^N} |\nabla u_i|^2 = \left(\frac{1}{2} - \frac{1}{p+1}\right) \frac{\left(\int_{B^N} |\nabla u_i|^2\right)^\frac{p+1}{p-1}}{\left(\int_{B^N} |x|^\alpha|u_i|^{p+1}\right)^\frac{2}{p-1}}. 
$$
Consequently, by Proposition \ref{levels}, we have the following characterization for ${C}_{\alpha,m}$:
\begin{equation}\label{C_alpha,m}
{C}_{\alpha,m} = \left(\frac{1}{2} - \frac{1}{p+1}\right) \inf_{u\in H_m} \sum_{i=1}^m \left[\frac{\left(\int_{B^N} |\nabla u_i|^2\right)^\frac{p+1}{p-1}}{\left(\int_{B^N} |x|^\alpha|u_i|^{p+1}\right)^\frac{2}{p-1}}\right].
\end{equation}
\end{remark}

\begin{proposition}\label{C_alpha}
There are constants $C_1,C_2>0$, that depend only on $p$, $N$ and $m$, such that 
$$
C_1 \left(\frac{\alpha + N}{N}\right)^\frac{p+3}{p-1} \leq {C}_{\alpha,m} \leq C_2 \left(\frac{\alpha + N}{N}\right)^\frac{p+3}{p-1},\quad \forall \alpha > \alpha_p.  
$$
Moreover, the function $\alpha \mapsto C_{\alpha,m}$ is strictly increasing in $(\alpha_p,\infty)$. 
\end{proposition}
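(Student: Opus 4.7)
My plan is to combine the characterization \eqref{C_alpha,m} with the change of variables $t = r^{(\alpha+2)/2}$ (the inverse of the scaling in \eqref{v_alpha}, without the $L^\infty$-normalization) in order to factor out the $\alpha$-dependent scale. For radial $u$ on $B^N$ with $v(t) := u(t^{2/(\alpha+2)})$, a direct computation gives
$$\int_{B^N}|\nabla u|^2 = \omega_{N-1}\tfrac{\alpha+2}{2}\int_0^1 v'(t)^2\,t^{\kappa_\alpha}\,dt, \qquad \int_{B^N}|x|^\alpha|u|^{p+1} = \omega_{N-1}\tfrac{2}{\alpha+2}\int_0^1 |v(t)|^{p+1}\,t^{\kappa_\alpha}\,dt,$$
where $\kappa_\alpha := 1 + \tfrac{2(N-2)}{\alpha+2}$, and crucially the \emph{same} weight $t^{\kappa_\alpha}$ appears on both sides. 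Substituting into \eqref{C_alpha,m} and collecting the powers of $(\alpha+2)/2$ (exponent $(p+1)/(p-1)+2/(p-1)=(p+3)/(p-1)$) gives
$$C_{\alpha,m} = \tfrac{p-1}{2(p+1)}\,\omega_{N-1}\left(\tfrac{\alpha+2}{2}\right)^{(p+3)/(p-1)}I_{\alpha,m},$$
where $I_{\alpha,m}$ is the analogous 1D infimum, taken over functions with $m$ nodal sets on $[0,1]$ vanishing at $t=1$, of $\sum_i(\int_0^1 v_i'^2\,t^{\kappa_\alpha}\,dt)^{(p+1)/(p-1)}/(\int_0^1|v_i|^{p+1}\,t^{\kappa_\alpha}\,dt)^{2/(p-1)}$. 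Since $(\alpha+2)/2$ and $(\alpha+N)/N$ differ by a multiplicative constant depending only on $N$, the claim reduces to two-sided bounds on $I_{\alpha,m}$ uniform for $\alpha\in(\alpha_p,\infty)$.

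For $\alpha>\alpha_p$ one has $\kappa_\alpha\in(1,\kappa^*]$ with $\kappa^*:=1+\tfrac{2(N-2)}{\alpha_p+2}$, so $t^{\kappa^*}\leq t^{\kappa_\alpha}\leq t$ on $[0,1]$. The upper bound on $I_{\alpha,m}$ follows by plugging in a fixed test function (for concreteness $v_0 := w$, the $m$-nodal radial solution of \eqref{problema-limite}): the weight comparison bounds each summand by an $\alpha$-independent constant. For the lower bound, I would note that the condition $\alpha>\alpha_p$ is precisely equivalent to $p+1\leq 2(\kappa_\alpha+1)/(\kappa_\alpha-1)$, i.e., the $L^{p+1}$-embedding is Sobolev-subcritical in the effective dimension $\kappa_\alpha+1$. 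Hence the weighted radial Sobolev inequality
$$\left(\int_0^1|v|^{p+1}t^\kappa\,dt\right)^{2/(p+1)}\leq C(\kappa)\int_0^1 v'(t)^2\,t^\kappa\,dt,\qquad v(1)=0,$$
holds for every $\kappa\in[1,\kappa^*]$, with $C(\kappa)$ continuous in $\kappa$ on this compact interval and finite even at the possibly critical endpoint $\kappa=\kappa^*$ thanks to Aubin--Talenti. Extending each nodal piece $v_i$ by zero to $[0,1]$ and applying this inequality would give each summand $\geq(\sup_{\kappa\in[1,\kappa^*]}C(\kappa))^{-(p+1)/(p-1)}=:\mu>0$ uniformly, and hence $I_{\alpha,m}\geq m\mu$.

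Strict monotonicity of $\alpha\mapsto C_{\alpha,m}$ is immediate from \eqref{C_alpha,m}: for any nonzero $u\in H_m$, each denominator $\int_{B^N}|x|^\alpha|u_i|^{p+1}$ is strictly decreasing in $\alpha$ (because $|x|<1$ in $B^N$), so each summand is strictly increasing in $\alpha$; evaluating \eqref{C_alpha,m} at the minimizer for $C_{\beta,m}$ then yields $C_{\alpha,m}<C_{\beta,m}$ whenever $\alpha<\beta$. The main technical hurdle will be the uniform Sobolev constant in the lower bound: in the classically supercritical case $p>(N+2)/(N-2)$, the endpoint $\kappa^*=(p+3)/(p-1)$ sits exactly at the Sobolev critical exponent in the effective dimension $\kappa^*+1$, so uniform boundedness of $C(\kappa)$ on $[1,\kappa^*]$ requires combining the standard continuity/compactness for subcritical $\kappa$ with positivity and finiteness of the (radial) critical Sobolev constant at the endpoint.
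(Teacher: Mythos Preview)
Your approach is correct and follows the same scheme as the paper---rescale via a power-law change of radial variable to extract the factor of order $\alpha^{(p+3)/(p-1)}$, then bound the residual quotient uniformly in $\alpha$---but the details differ. The paper uses the exponent $\theta=N/(\alpha+N)$ (as in \cite{smets-su-willem}), which leaves the $L^{p+1}$ integral \emph{unweighted} and places the $\alpha$-dependent weight $|x|^{(2-N)(1-\theta)}$ only on the gradient term; the residual $R_{\theta,m}$ is then automatically monotone in $\theta$, and the two-sided bound follows immediately from the endpoint values $R_{0,m}$ and $R_{1,m}$, with no Sobolev constant to track. Your choice $2/(\alpha+2)$ (the transformation of Theorem~\ref{main-theorem}) puts the \emph{same} weight $t^{\kappa_\alpha}$ on both integrals, so the uniform lower bound requires a weighted Sobolev inequality with constant bounded on the whole range of $\kappa_\alpha$; this is less automatic, but you correctly isolate the only delicate point---the critical endpoint $\kappa=\kappa^*=(p+3)/(p-1)$ when $p>(N+2)/(N-2)$---and the remedy (finiteness of the critical radial Sobolev constant). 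One small slip: $(\alpha+2)/2$ and $(\alpha+N)/N$ do not differ by a \emph{fixed} multiplicative constant; their ratio $N(\alpha+2)/(2(\alpha+N))$ varies in $(1,N/2)$, which is still enough for the two-sided estimate. Your monotonicity argument, working directly from \eqref{C_alpha,m} via the strict monotonicity of $\alpha\mapsto|x|^\alpha$ on $B^N$, is a minor variant of the paper's (which reads it off the product formula \eqref{c_alpha_m}).
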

\begin{proof}
Here we use some arguments from the proof of \cite[Theorem 4.1]{smets-su-willem}, where the case of positive solutions are treated. Given $u\in H_m$, define the rescaled function $v(t) = u(r)$, where $r=t^\theta$ and $\theta= \frac{N}{\alpha+N}\in (0,1]$. Of course $v$ has $m$ nodal sets, $v_i(t)=u_i(r)$ for each $i=1,\cdots,m$, and 
$$
\begin{aligned}
\int_{B^N} |x|^\alpha |u_i(x)|^{p+1} dx  = \theta\int_{B^N} |v_i|^{p+1}dx, \qquad
 \int_{B^N} |\nabla u_i(x)|^2 dx =\theta^{-1}\int_{B^N} |\nabla v_i(x)|^2 |x|^{(2-N)(1-\theta)} dx. 
 \end{aligned}
$$
Then 
$$
\frac{\left(\displaystyle\int_{B^N} |\nabla u_i|^2\right)^\frac{p+1}{p-1}}{\left(\displaystyle\int_{B^N} |x|^\alpha|u_i|^{p+1}\right)^\frac{2}{p-1}} = \frac{1}{\theta^\frac{p+3}{p-1}} \frac{\left(\displaystyle\int_{B^N} |\nabla v_i|^2 |x|^{(2-N)(1-\theta)}\right)^\frac{p+1}{p-1}}{\left(\displaystyle\int_{B^N} |v_i|^{p+1}\right)^\frac{2}{p-1}}
$$
which, by \eqref{C_alpha,m}, yields
\begin{equation}\label{c_alpha_m}
{C}_{\alpha,m} = \frac{\left(\frac{1}{2} - \frac{1}{p+1}\right) {R}_{\theta,m}}{\theta^{\frac{p+3}{p-1}}} = \left(\frac{1}{2} - \frac{1}{p+1}\right)\left(\frac{\alpha + N}{N}\right)^\frac{p+3}{p-1} {R}_{\theta,m}\,,
\end{equation}
where
$$
{R}_{\rho,m} := \inf_{v\in H_m} \sum_{i=1}^m \frac{\left(\int_{B^N} |\nabla v_i|^2 |x|^{(2-N)(1-\rho)}\right)^\frac{p+1}{p-1}}{\left(\int_{B^N} |v_i|^{p+1}\right)^\frac{2}{p-1}},\quad \rho \in [0,1]. 
$$
Since $\rho \mapsto {R}_{\rho,m}$ is non-increasing and $0< {R}_{1,m}< {R}_{0,m}<\infty$, we infer that
$$
C_1\left(\frac{\alpha + N}{N}\right)^\frac{p+3}{p-1} \leq {C}_{\alpha,m} \leq C_2\left(\frac{\alpha + N}{N}\right)^\frac{p+3}{p-1} \quad \forall \alpha>\alpha_p,
$$
where $C_1 = \left(\frac{1}{2} - \frac{1}{p+1}\right) \displaystyle\lim_{\rho \to 1^-} {R}_{\rho,m}$ and $C_ 2 = \left(\frac{1}{2} - \frac{1}{p+1}\right) \displaystyle\lim_{\rho \to 0^+} {R}_{\rho,m}$. The strict monotonicity of $\alpha \mapsto {C}_{\alpha,m}$ follows immediately from \eqref{c_alpha_m}. 
\end{proof}

\section{Estimates for the $L^{\infty}$-norms}\label{sec:norm}
 
 In this section we present asymptotic estimates for $\|u_\alpha\|_\infty$. We recall that, by \cite[Lemma 5.2]{harrabi}, $u_\alpha(0) = \displaystyle\max_{x\in B^N} u_\alpha(x) = \|u_\alpha\|_{\infty}$. Moreover, by Proposition \ref{C_alpha}, there exist constants $C_1,C_2>0$, that do not depend on $\alpha$, such that
\begin{equation}\label{limitacao-grad-u}
C_1 \left(\frac{\alpha + N}{N}\right)^\frac{p+3}{p-1} \leq \int_{B^N} |\nabla u_\alpha|^2 = \int_{B^N} |x|^\alpha |u_\alpha|^{p+1} \leq C_2 \left(\frac{\alpha + N}{N}\right)^\frac{p+3}{p-1}, \quad \forall \alpha>\alpha_p. 
\end{equation}
We start with the bound from below. 

\begin{lemma}\label{limitacao-por-baixo}
There exists a constant $C>0$, that does not depend of $\alpha$, such that
$$
\|u_\alpha\|_\infty \geq C\left(\frac{\alpha + N}{N}\right)^\frac{2}{p-1},\quad \forall \alpha>\alpha_p. 
$$ 
\end{lemma}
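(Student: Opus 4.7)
The plan is to combine the lower bound from the energy estimate \eqref{limitacao-grad-u} with a trivial pointwise upper bound on the nonlinear term, using the fact that $u_\alpha$ attains its supremum at the origin. By \cite[Lemma 5.2]{harrabi}, which is already invoked just above the statement, $\|u_\alpha\|_\infty = u_\alpha(0)$, so in particular $|u_\alpha(x)|^{p+1}\leq \|u_\alpha\|_\infty^{p+1}$ for every $x\in B^N$.

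First I would estimate from above
\[
\int_{B^N} |x|^\alpha |u_\alpha|^{p+1}\,dx \;\leq\; \|u_\alpha\|_\infty^{p+1}\int_{B^N} |x|^\alpha\,dx \;=\; \|u_\alpha\|_\infty^{p+1}\,\frac{\omega_{N-1}}{\alpha+N},
\]
using polar coordinates for the last equality (here $\omega_{N-1}$ denotes the surface measure of $\partial B^N$). Combining this with the left inequality in \eqref{limitacao-grad-u} gives
\[
C_1 \left(\frac{\alpha+N}{N}\right)^{\frac{p+3}{p-1}} \;\leq\; \|u_\alpha\|_\infty^{p+1}\,\frac{\omega_{N-1}}{\alpha+N}.
\]
Rearranging and writing $\alpha+N = N\cdot\frac{\alpha+N}{N}$, the exponent of $\frac{\alpha+N}{N}$ on the right becomes $\frac{p+3}{p-1}+1=\frac{2(p+1)}{p-1}$, so
\[
\|u_\alpha\|_\infty^{p+1} \;\geq\; \frac{C_1 N}{\omega_{N-1}}\left(\frac{\alpha+N}{N}\right)^{\frac{2(p+1)}{p-1}}.
\]
Taking the $(p+1)$-th root yields the desired bound with $C:=\bigl(C_1 N/\omega_{N-1}\bigr)^{1/(p+1)}$, which is independent of $\alpha$.

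There is essentially no obstacle here: the only substantive ingredient is the a priori energy estimate of Proposition \ref{C_alpha}, and the rest is a direct algebraic manipulation. The same argument would fail to deliver the matching upper bound on $\|u_\alpha\|_\infty$, since the pointwise inequality $|u_\alpha|\leq \|u_\alpha\|_\infty$ goes the wrong way for that purpose; accordingly, I expect the companion upper bound (presumably handled by a different lemma in Section~\ref{sec:norm}) to be the real work, while the present lemma is a short application of \eqref{limitacao-grad-u}.
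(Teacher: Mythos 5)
Your proof is correct, and it is in fact somewhat simpler than the paper's. Both arguments rest on the same energy estimate (the left inequality in \eqref{limitacao-grad-u}, which comes from Proposition \ref{C_alpha}), but you reach the conclusion more directly. The paper introduces the rescaling $\widetilde{u}_\alpha(y) = \theta^{2/(p-1)} u_\alpha(\theta^{1/(2-N)}y)$ with $\theta=N/(\alpha+N)$, shows $C_1\leq\int|\nabla\widetilde{u}_\alpha|^2\leq C_2$, and then factors $|\widetilde u_\alpha|^{p+1}\leq\|\widetilde u_\alpha\|_\infty^{p-1}|\widetilde u_\alpha|^2$ followed by a H\"older inequality to separate the weight from $|\widetilde u_\alpha|^{p+1}$, finally using $\int\theta^{-2(N-1)/(N-2)}|\theta^{1/(2-N)}y|^\alpha\,dy=\omega_{N-1}/N$. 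You instead bound $|u_\alpha|^{p+1}\leq\|u_\alpha\|_\infty^{p+1}$ pointwise, compute $\int_{B^N}|x|^\alpha\,dx=\omega_{N-1}/(\alpha+N)$ directly, and just rearrange; the exponent bookkeeping you carry out ($\tfrac{p+3}{p-1}+1=\tfrac{2(p+1)}{p-1}$, then divide by $p+1$ to get $\tfrac{2}{p-1}$) is correct. The paper's extra machinery (rescaling plus H\"older) buys it a uniform normalization that is reused elsewhere in Section \ref{sec:norm}, but for this particular lower bound it is not needed; your version only uses the lower energy bound $C_1$ (the paper's constant ends up depending on both $C_1$ and $C_2$), and avoids both the change of variables and the two-step H\"older argument. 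Your closing remark about the matching upper bound is also accurate: the pointwise inequality indeed goes the wrong way there, and Lemma \ref{limitacao-por-cima} is handled by a genuinely different (blow-up/compactness) argument.
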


\begin{proof}
For each $\alpha>\alpha_p$, with $\theta= \frac{N}{\alpha + N}$, consider the function 
$$
\widetilde{u}_\alpha(y) := \theta^{\frac{2}{p-1}} u_\alpha\left(\theta^{\frac{1}{2-N}}y\right),\quad y \in \Omega_\alpha := B\left(0, \theta^{\frac{1}{N-2}}\right).
$$
Since $\|\widetilde{u}_\alpha\|_{\infty} = \left(\frac{N}{\alpha + N}\right)^\frac{2}{p-1}\|u_\alpha\|_{\infty}$, it suffices to show that $\|\widetilde{u}_\alpha\|_{\infty} \geq C$.  Performing the change of variables $x=\theta^{\frac{1}{2-N}}y$, we infer that 
$$
\begin{aligned}
\int_{\Omega_\alpha} |\nabla \widetilde{u}_\alpha(y)|^2 dy =\left(\frac{N}{\alpha + N}\right)^{\frac{p+3}{p-1}} \int_{B^N} |\nabla u_\alpha(x)|^2 dx,
\end{aligned}
$$
whence, by \eqref{limitacao-grad-u},
\begin{equation}\label{DES1}
C_1 \leq \int_{\Omega_\alpha} |\nabla \widetilde{u}_\alpha|^2 dy \leq C_2, \quad \forall \alpha>\alpha_p. 
\end{equation}
In addition, since $u_\alpha$ solves \eqref{alpha}, $\widetilde{u}_\alpha$ satisfies
\begin{equation}\label{eq:nova}
-\Delta \widetilde{u}_\alpha(y) = \theta^{\frac{-2(N-1)}{N-2}} \left|\theta^{\frac{1}{2-N}}y\right|^\alpha |{\widetilde{u}_\alpha(y)}|^{p-1}\widetilde{u}_\alpha(y),\quad  y\in \Omega_\alpha,\quad \widetilde{u}_\alpha=0\ \text{on}\ \partial \Omega_\alpha. 
\end{equation}
Consequently, by Hölder's inequality, \eqref{DES1} and \eqref{eq:nova},
\begin{equation}
\begin{aligned}\label{DES2}
0< C_1 &\leq \int_{\Omega_\alpha} |\nabla \widetilde{u}_\alpha|^2 dy = \int_{\Omega_\alpha} \theta^{\frac{-2(N-1)}{N-2}} \left|\theta^{\frac{1}{2-N}}y\right|^\alpha |{\widetilde{u}_\alpha}|^{p+1} dy\\
 &\leq \|{\widetilde{u}_\alpha}\|_{\infty}^{p-1} \int_{\Omega_\alpha} \theta^{\frac{-2(N-1)}{N-2}} \left|\theta^{\frac{1}{2-N}}y\right|^\alpha |{\widetilde{u}_\alpha}|^{2} dy\\
& \leq \|{\widetilde{u}_\alpha}\|_{\infty}^{p-1} \left(\int_{\Omega_\alpha} \!\!\!\!\! \theta^{\frac{-2(N-1)}{N-2}} \left|\theta^{\frac{1}{2-N}}y\right|^\alpha dy\right)^{\frac{p-1}{p+1}} \left(\int_{\Omega_\alpha} \!\!\!\!\! \theta^{\frac{-2(N-1)}{N-2}} \left|\theta^{\frac{1}{2-N}}y\right|^\alpha |{\widetilde{u}_\alpha}|^{p+1} dy\right)^\frac{2}{p+1}\\
&\leq (C_2)^{\frac{2}{p+1}} \|{\widetilde{u}_\alpha}\|_{\infty}^{p-1} \left(\int_{\Omega_\alpha} \!\!\!\!\! \theta^{\frac{-2(N-1)}{N-2}} \left|\theta^{\frac{1}{2-N}}y\right|^\alpha dy\right)^{\frac{p-1}{p+1}} .
\end{aligned}
\end{equation}
Note that, since $\theta= \frac{N}{\alpha + N}$,
\begin{equation}\label{IGUAL3}
\begin{aligned}
\int_{\Omega_\alpha} \theta^{\frac{-2(N-1)}{N-2}} \left|\theta^{\frac{1}{2-N}}y\right|^\alpha dy = \omega_{N-1}\theta^{\frac{2(N-1)+\alpha}{2-N}} \int_0^{\theta^{\frac{1}{N-2}}} t^{\alpha+N-1} dt = \frac{\omega_{N-1}}{N}. 
\end{aligned}
\end{equation}
Therefore, from \eqref{DES2} and \eqref{IGUAL3}, there exists $C>0$, independent of $\alpha$, such that
$$
\|{\widetilde{u}_\alpha}\|_{\infty} \geq C,\quad \forall \alpha> \alpha_p,
$$
which proves the lemma. 
\end{proof}

To prove the reverse estimate, for each $\alpha>\alpha_p$, consider 
\begin{equation}\label{v-alpha}
v_\alpha(t) := \left(\frac{2}{\alpha+2}\right)^{\frac{2}{p-1}}u_\alpha\left(t^{\frac{2}{\alpha+2}}\right),\quad t\in [0,1]. 
\end{equation}
Then by \cite[Proposition 4.2]{cowan}, $v_\alpha$ satisfies
\begin{equation}\label{EDO-v}
\left\{
\begin{array}{l}
-(t^{M_\alpha-1}v_\alpha')' = t^{M_\alpha -1}|v_\alpha|^{p-1}v_\alpha \quad \textrm{in}\ \ (0,1), \vspace{0.3 cm}\\
          v_\alpha(1)=v_\alpha'(0)=0,
\end{array}
\right.
\end{equation}
where $M_\alpha := \frac{2(\alpha+N)}{\alpha+2} \in (2, N)$. Then, integrating from $0$ to $t$, 
 \begin{equation}\label{v'}
 v_\alpha'(t) = -\frac{1}{t^{M_\alpha -1}}\int_0^t s^{M_\alpha -1}|v_\alpha(s)|^{p-1}v_\alpha (s) ds\quad \forall t\in (0,1]. 
 \end{equation}
 Moreover, performing  the change of variables $t\leftrightarrow r$, where $r=t^{\frac{2}{\alpha+2}}$, we obtain
 \begin{equation}\label{relacao-grad-uv}
 \begin{aligned}
 \int_0^1 |v_\alpha'(t)|^2 t^{M_\alpha-1}dt = \left(\frac{2}{\alpha+2}\right)^\frac{p+3}{p-1} \int_0^1 |u_\alpha'(r)|^2 r^{N-1}dr. 
 \end{aligned}
 \end{equation}

 \begin{lemma}\label{limitacao-por-cima}
There exist $\alpha_0 > \alpha_p$ and $C>0$, that do not depend on $\alpha$, such that 
 $$
 \|u_\alpha\|_\infty \leq C\left(\frac{\alpha+2}{2}\right)^{\frac{2}{p-1}} \ \ \forall \, \alpha\geq \alpha_0. 
 $$ 
 \end{lemma}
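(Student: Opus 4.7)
The plan is to pass from the $L^{\infty}$ estimate on $u_{\alpha}$ to a uniform bound on $v_{\alpha}(0)$, and then prove that bound by a contradiction/blow-up argument using the rescaling invariance of \eqref{EDO-v}.

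First I would observe that the estimate is equivalent to $v_{\alpha}(0)\le C$ for all $\alpha$ large enough: by \cite[Lemma~5.2]{harrabi} (recalled just above Lemma \ref{limitacao-por-baixo}) and the definition \eqref{v-alpha} of $v_{\alpha}$, one has
$$
\|u_\alpha\|_\infty = u_\alpha(0) = \left(\frac{\alpha+2}{2}\right)^{2/(p-1)} v_\alpha(0).
$$
So the proof reduces to the uniform bound $v_\alpha(0)\le C$ for $\alpha\ge\alpha_0$.

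Next I would argue by contradiction. Suppose there is a sequence $\alpha_k\to\infty$ with $M_k := v_{\alpha_k}(0)\to\infty$. Set $R_k := M_k^{(p-1)/2}\to\infty$ and define the rescaled functions
$$
\tilde v_k(s) := M_k^{-1}\, v_{\alpha_k}\!\bigl(s\,R_k^{-1}\bigr), \qquad s\in[0,R_k].
$$
A direct computation, together with \eqref{EDO-v}, shows that $\tilde v_k$ satisfies on $(0,R_k)$ the same radial ODE,
$$
-\bigl(s^{M_{\alpha_k}-1}\tilde v_k'\bigr)' = s^{M_{\alpha_k}-1}|\tilde v_k|^{p-1}\tilde v_k,\qquad \tilde v_k(0)=1,\ \tilde v_k'(0)=0,\ \tilde v_k(R_k)=0,
$$
and that $\tilde v_k$ has exactly $m$ nodal sets on $[0,R_k]$ (inherited from $v_{\alpha_k}$). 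The monotone energy
$$
E_k(s) := \tfrac12(\tilde v_k'(s))^2 + \tfrac{1}{p+1}|\tilde v_k(s)|^{p+1},\qquad E_k'(s) = -\tfrac{M_{\alpha_k}-1}{s}(\tilde v_k'(s))^2\le 0,
$$
yields $E_k\le E_k(0)=1/(p+1)$, so $\{\tilde v_k\}$ is uniformly bounded in $C^1([0,R_k])$, independent of $k$.

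Finally I would pass to the limit. Since $M_{\alpha_k}\to 2$, Ascoli-Arzelà together with the integral form \eqref{v'} (which is needed to handle the singularity at $s=0$) implies that $\tilde v_k$ converges, up to subsequence, in $C^0_{\mathrm{loc}}([0,\infty))\cap C^2_{\mathrm{loc}}((0,\infty))$ to the unique radial solution $\tilde w$ on $[0,\infty)$ of the planar Lane-Emden equation
$$
\tilde w'' + \tfrac{1}{s}\tilde w' + |\tilde w|^{p-1}\tilde w=0,\qquad \tilde w(0)=1,\ \tilde w'(0)=0.
$$
Because Lane-Emden in $\mathbb R^2$ is subcritical for every $p>1$, the classical Liouville-type theorem forbids any bounded entire sign-constant solution; equivalently, $\tilde w$ has infinitely many zeros on $[0,\infty)$ (as can be seen directly through the Emden-Fowler change $z(\tau)=\tilde w(e^{\tau})$ which produces $z_{\tau\tau}+e^{2\tau}|z|^{p-1}z=0$, a strictly concave equation whenever $z>0$). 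Choose $T>0$ so that $\tilde w$ has more than $m$ sign changes in $(0,T)$; uniform convergence on $[0,T]$ forces $\tilde v_k$ to have more than $m$ sign changes in $(0,T)\subset(0,R_k)$ for all large $k$, contradicting the fact that $\tilde v_k$ has only $m$ nodal sets. This contradiction yields the required uniform bound.

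The main obstacle is the compactness step at the singular origin: away from $0$ the coefficient $(M_{\alpha_k}-1)/s$ is smooth and standard ODE continuous dependence applies, but at $s=0$ one must work with \eqref{v'} and the initial data $\tilde v_k(0)=1$, $\tilde v_k'(0)=0$ to justify $C^0_{\mathrm{loc}}([0,\infty))$ convergence to $\tilde w$. Once this is done, the oscillation property of planar Lane-Emden closes the argument.
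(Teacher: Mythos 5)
Your argument is correct, but it follows a genuinely different route from the paper's. The paper's proof relies on the energy estimate $\int_0^1 |v_\alpha'(t)|^2 t^{M_\alpha-1}\,dt \le C_0$ coming from Proposition \ref{C_alpha} via \eqref{relacao-grad-uv}; it then chooses a rescaling $\overline{v}_\alpha(s)=\|v_\alpha\|_\infty^{-1} v_\alpha\bigl(\|v_\alpha\|_\infty^{1-p}s\bigr)$ whose weighted Dirichlet energy in $\mathcal{D}^{1,2}_M$ (for a fixed $M>2$ with $p(M-2)<M$, so $M_\alpha\le M$ for $\alpha$ large) picks up the factor $\|v_{\alpha_n}\|_\infty^{(M-2)p-M}\to 0$; the Sobolev embedding $\mathcal{D}^{1,2}_M\subset L^{2^*_M}_M$ then forces $\overline v_{\alpha_n}\to 0$ a.e., which contradicts the normalization $\overline v_{\alpha_n}(0)=1$ combined with the equicontinuity $|\overline v_{\alpha_n}'|\le 1$ from \eqref{v'}. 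You instead use the equation-preserving rescaling $\tilde v_k(s)=M_k^{-1}v_{\alpha_k}(s/R_k)$ with $R_k=M_k^{(p-1)/2}$, obtain uniform $C^1$ bounds from the monotone pointwise energy $E_k(s)\le 1/(p+1)$, pass to the limit $M_{\alpha_k}\to 2$ to get a nontrivial entire radial solution of the 2D Lane--Emden equation with $\tilde w(0)=1$, $\tilde w'(0)=0$, and close via the oscillation/Liouville property of planar Lane--Emden: such a $\tilde w$ changes sign infinitely often, while each $\tilde v_k$ has at most $m-1$ sign changes, a contradiction for $k$ large. The trade-off is that the paper's argument imports the sharp energy estimate from Section 2 but is otherwise elementary (a rescaling plus Sobolev inequality), whereas yours decouples this lemma entirely from the Nehari/variational material of Section 2, at the cost of invoking the global oscillation theorem for the 2D Lane--Emden equation. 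Both are valid proofs.
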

 
 \begin{proof}  By \eqref{v-alpha}, it is enough to show that 
 $$
 v_\alpha(0)= \|v_\alpha\|_\infty \leq C, \ \ \forall\, \alpha\geq \alpha_0.
 $$
 From \eqref{limitacao-grad-u} and \eqref{relacao-grad-uv}, there exists a constant $C_0>0$ independent of $\alpha$ such that
 \begin{equation}\label{lim-grad-v}
 \int_0^1 |v_\alpha'(t)|^2 t^{M_\alpha-1}dt \leq C_0\quad \forall \alpha> \alpha_p.  
 \end{equation}
 
Fix $M>2$, with $p<\frac{M}{M-2}$, and take $\alpha_0> 0$ so that $M_\alpha \leq M$ for $\alpha \geq \alpha_0$. By contradiction, suppose that $\|v_\alpha\|_{\infty}$ is not bounded in $[\alpha_0,\infty)$. Then there exists a sequence $\alpha_n \geq \alpha_0$ such that $\|v_{\alpha_n}\|_{\infty}\to\infty$ as $n\to\infty$. Set
 $$
 \overline{v}_\alpha(s) = \frac{1}{\|v_\alpha\|_{\infty}}v_\alpha \left(\|v_\alpha\|_\infty^{1-p}s\right),\quad \text{for}\ s\in \left[0, \|v_\alpha\|_\infty^{p-1}\right], \quad \text{and} \ \ t = \|v_\alpha\|_\infty^{1-p}s.
 $$
Then, since $M\geq M_{\alpha_n}$, from \eqref{lim-grad-v} and $(M-2)p-M<0$, we infer that
 $$
 \begin{aligned}
 \int_0^{\|v_{\alpha_n}\|_\infty^{p-1}} |\overline{v}_{\alpha_n}'(s)|^2 s^{M-1}ds &= \|v_{\alpha_n}\|_\infty^{(M-2)p-M} \int_0^1 |v_{\alpha_n}'(t)|^2 t^{M-1}dt\\
  &\leq \|v_{\alpha_n}\|_\infty^{(M-2)p-M} \int_0^1 |v_{\alpha_n}'(t)|^2 t^{M_{\alpha_n}-1}dt \leq C_0 \|v_{\alpha_n}\|_\infty^{(M-2)p-M} \to 0. 
 \end{aligned}
 $$
Hence $
 \overline{v}_{\alpha_n} \to 0\ \text{in}\ \mathcal{D}^{1,2}_M,$ 
 where
 $$
 \mathcal{D}^{1,2}_M := \left\{w \in L^{2^*_M}_M(0,\infty) :\quad 
\begin{aligned}
&\text{$w$ has first order weak derivative and}\\ 
&\|w\|_{\mathcal{D}^{1,2}_M}^2 := \int_0^\infty |w'(t)|^2 t^{M-1}dt <\infty
\end{aligned}
\right\}
$$
and
$$
L^{2^*_M}_M(0,\infty):= \left\{v:[0,\infty)\to \R\ \text{measurable} : \int_0^{\infty} |v(s)|^{\frac{2M}{M-2}} s^{M-1}ds<\infty\right\}. 
$$
 Since, by Sobolev's inequality, the inclusion $
 \mathcal{D}^{1,2}_M \subset L^{2^*_M}_M(0,\infty)$ is continuous, up to a subsequence, $\overline{v}_{\alpha_n} \to 0$ a.e. in $[0,\infty)$.
 
 On the other hand, 
 $$
 \overline{v}_{\alpha_n}(0) = \|\overline{v}_{\alpha_n}\|_{\infty} = 1, 
 $$
and thus, the sequence of functions $(\overline{v}_{\alpha_n})$ is uniformly bounded. 
 Moreover, by \eqref{v'},
 $$
|\overline{v}_{\alpha_n}'(s)| = \|v_{\alpha_n}\|_\infty^{-p} |v_{\alpha_n}'\left(\|v_{\alpha_n}\|_\infty^{1-p}s\right)| \leq \|v_{\alpha_n}\|_\infty^{-p}\|v_{\alpha_n}\|_\infty^{p} = 1. 
 $$
 In particular, the sequence of functions $(\overline{v}_{\alpha_n})$ is equicontinuous. 
 Consequently, by Arzelà-Ascoli theorem, up to a subsequence, $\overline{v}_{\alpha_n} \to \overline{v}$ uniformly in $[0,1]$ with $\|\overline{v}\|_{\infty} = 1$, which leads us to a contradiction. 
 \end{proof}
 
 \begin{remark}
 Notice that, unlike Lemma \ref{limitacao-por-baixo}, the conclusion of the previous lemma is not true for all $\alpha > \alpha_p$. Consider, for example, $m=1$ ($u_\alpha$ is positive) and $p=\frac{N+2}{N-2}$ ($\alpha_p = 0$). In this case,  $\|u_\alpha\|_\infty \to \infty$ as $\alpha \to 0$. Indeed, suppose that $\|u_\alpha\|_\infty$ is uniformly bounded as $\alpha\to 0$.  Thus, since $u_\alpha$ solves
 $$
-\Delta u = |x|^\alpha u^\frac{N+2}{N-2},\ u>0 \ \text{in}\ B^N,\quad u =0\ \text{on}\ \partial B^N,
 $$
we have that as $\alpha \to 0$, $u_\alpha$ converges uniformly to a nonnegative solution $u$ of the limit problem
  $$
-\Delta u = u^\frac{N+2}{N-2} \ \text{in}\ B^N,\quad u =0\ \text{on}\ \partial B^N. 
 $$
This implies that $u = 0$, which is a contradiction, because $\|u_\alpha\|_\infty \geq C$ by Lemma \ref{limitacao-por-baixo}.
 \end{remark}

 \section{Estimates for the local extrema and proof of Theorem \ref{zeros-blowup} (iii)}\label{local-extrema}\label{sec:local}
If $m= 1$, then $x=0$ is the unique local extremum of $u_{\alpha}$, and Lemmas \ref{limitacao-por-baixo} and \ref{limitacao-por-cima} give a sharp estimate for $u_{\alpha}(0)$. Next, if $m\geq 2$, denote by $r_{1,\alpha}<\cdots<r_{m,\alpha}=1$ the zeros of $u_\alpha$ in $(0,1]$ and set
 $$
 \mathcal{M}_{1,\alpha} := \max\{u_\alpha(r): 0\leq r \leq r_{1,\alpha}\},\quad \mathcal{M}_{i+1,\alpha} := \max\{|u_\alpha(r)|: r_{i,\alpha}\leq r \leq r_{i+1,\alpha}\}
 $$
 for $i=1,\cdots,m-1$. Similarly, denote by $t_{1,\alpha}<\cdots<t_{m,\alpha}=1$ the zeros in $(0,1]$ of the function $v_\alpha$ defined in \eqref{v-alpha}. Then
 \begin{equation}\label{t_i,alpha}
 t_{i,\alpha} = (r_{i,\alpha})^{\frac{\alpha+2}{2}},\quad \text{for each}\ i=1,\cdots,m. 
 \end{equation}
Moreover, define
 $$
 \mathcal{N}_{1,\alpha} := \max\{v_\alpha(t): 0\leq t \leq t_{1,\alpha}\},\quad \mathcal{N}_{i+1,\alpha} := \max\{|v_\alpha(t)|: t_{i,\alpha}\leq t \leq t_{i+1,\alpha}\},
 $$
and observe that
\begin{equation}\label{eq:nm}
 \mathcal{N}_{i,\alpha} =\left(\frac{2}{\alpha+2}\right)^\frac{2}{p-1} \mathcal{M}_{i,\alpha},\quad \text{for each}\ i=1,\cdots,m. 
\end{equation}

\begin{lemma}\label{t_1,alpha}
There exists $\delta \in (0,1)$ and $\alpha_0 > \alpha_p$ such that 
$$
t_{1,\alpha} \geq \delta \quad \forall \alpha \geq \alpha_0. 
$$
In particular, by \eqref{t_i,alpha}, $r_{i,\alpha} \to 1$ as $\alpha \to \infty$, for each $i=1,\cdots,m$. 
\end{lemma}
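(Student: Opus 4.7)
The plan is a short direct argument (no contradiction), exploiting the integral formula \eqref{v'} together with the uniform upper bound for $v_\alpha(0)=\|v_\alpha\|_\infty$ extracted from Lemma \ref{limitacao-por-cima}: indeed, by the definition \eqref{v-alpha} of $v_\alpha$ and the identity $u_\alpha(0)=\|u_\alpha\|_\infty$, Lemma \ref{limitacao-por-cima} yields a constant $C>0$ independent of $\alpha$ with $v_\alpha(0)\le C$ for all $\alpha\ge\alpha_0$. On the first nodal interval $[0,t_{1,\alpha}]$ the function $v_\alpha$ is positive with $v_\alpha(t_{1,\alpha})=0$, and \eqref{v'} shows it is strictly decreasing there, so $v_\alpha(0)=-\int_0^{t_{1,\alpha}}v_\alpha'(t)\,dt$ rewrites, via \eqref{v'}, as
$$
v_\alpha(0)\;=\;\int_0^{t_{1,\alpha}}\frac{1}{t^{M_\alpha-1}}\int_0^t s^{M_\alpha-1}\,v_\alpha(s)^p\,ds\,dt.
$$

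\textbf{Executing the bound.} Estimating the inner integrand by $v_\alpha(s)^p\le v_\alpha(0)^p$ on $[0,t_{1,\alpha}]$ and computing explicitly gives
$$
v_\alpha(0)\;\le\;v_\alpha(0)^p\int_0^{t_{1,\alpha}}\frac{t}{M_\alpha}\,dt\;=\;\frac{v_\alpha(0)^p\,t_{1,\alpha}^{\,2}}{2M_\alpha}.
$$
Dividing by $v_\alpha(0)>0$ and using $M_\alpha>2$ together with $v_\alpha(0)\le C$ yields
$$
t_{1,\alpha}^{\,2}\;\ge\;\frac{2M_\alpha}{v_\alpha(0)^{p-1}}\;\ge\;\frac{4}{C^{p-1}},
$$
so that $t_{1,\alpha}\ge 2/C^{(p-1)/2}=:\delta_0$ uniformly in $\alpha\ge\alpha_0$. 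Choosing any $\delta\in(0,\min\{\delta_0,1\})$ completes the first claim. For the second claim, by \eqref{t_i,alpha} one has $r_{1,\alpha}=t_{1,\alpha}^{2/(\alpha+2)}\ge\delta^{2/(\alpha+2)}\to 1$ as $\alpha\to\infty$, and since $r_{i,\alpha}\ge r_{1,\alpha}$ for every $i=1,\dots,m$, the conclusion $r_{i,\alpha}\to 1$ follows.

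\textbf{Main obstacle.} I do not foresee any genuine obstacle: the uniform bound on $v_\alpha(0)$ is Lemma \ref{limitacao-por-cima} transcribed through \eqref{v-alpha}, and the positivity and monotonicity of $v_\alpha$ on $[0,t_{1,\alpha}]$ are immediate from \eqref{v'}. The only point that needs a little care is to pick $\delta$ strictly below $1$, which is unproblematic, since for $m\ge 2$ one automatically has $t_{1,\alpha}<1$, while for $m=1$ the statement is vacuous with $t_{1,\alpha}=1$.
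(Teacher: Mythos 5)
Your proof is correct, and it takes a genuinely different route from the paper's. The paper argues by contradiction: assuming $t_{1,\alpha_n}\to 0$ along a sequence, it invokes the uniform bounds on $\|v_\alpha\|_\infty$ and $\|v_\alpha'\|_\infty$ to extract, via Arzel\`a--Ascoli, a uniform limit $v$, and then derives the contradiction $v(0)=0$ versus $v(0)\ge C_1>0$ (the latter coming from Lemma \ref{limitacao-por-baixo}). You instead work directly from the ODE identity \eqref{v'}: writing $v_\alpha(0)=-\int_0^{t_{1,\alpha}}v_\alpha'$, bounding $v_\alpha\le v_\alpha(0)$ on the first nodal interval, and integrating explicitly yields $t_{1,\alpha}^{\,2}\ge 2M_\alpha/v_\alpha(0)^{p-1}\ge 4/C^{p-1}$, where $C$ is the constant of Lemma \ref{limitacao-por-cima}. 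This is more elementary (no compactness, no subsequence extraction), produces an explicit constant $\delta$, and in fact only needs the upper bound on $\|v_\alpha\|_\infty$ — the lower bound of Lemma \ref{limitacao-por-baixo}, which the paper's contradiction hinges on, is never used. The paper's compactness machinery is in any case set up because it is reused in the proof of Theorem \ref{main-theorem}, so the two proofs fit naturally into slightly different narratives, but as a standalone argument yours is cleaner and sharper. One cosmetic remark: you should take $\delta=\min\{\delta_0,1/2\}$ (or similar) rather than ``any $\delta\in(0,\min\{\delta_0,1\})$'', since the latter phrasing leaves $\delta$ unspecified; but this is purely a matter of wording, not substance.
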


\begin{proof}
Let $\alpha_0 > \alpha_p$ be as in Lemma \ref{limitacao-por-cima}. By \eqref{v-alpha} and Lemmas \ref{limitacao-por-baixo} and \ref{limitacao-por-cima}, there exist constants $C_1, C_2 >0$ such that
\begin{equation}\label{limitacao-v}
C_1 \leq \|v_\alpha\|_{\infty} \leq C_2,\quad \text{for all $\alpha\geq \alpha_0$}. 
\end{equation}
Moreover, by \eqref{v'},
\begin{equation}\label{limitacao-v'}
\|{v_\alpha}'\|_{\infty} \leq (C_2)^p,\quad \text{for all $\alpha \geq \alpha_0$}.
\end{equation}
Suppose, by contradiction, that there exists a sequence $\alpha_n \geq \alpha_0$ such that $t_{1,\alpha_n} \to 0$ as $n\to \infty$. By \eqref{limitacao-v} and \eqref{limitacao-v'}, we may use Arzelà-Ascoli theorem to conclude that, up to a subsequence, $v_{\alpha_n} \to v$ uniformly in $[0,1]$ as $n\to\infty$. Note that
$$
v(0) = v(0)-v_{\alpha_n}(t_{1,\alpha_n}) = [v(0)-v(t_{1,\alpha_n})] + [v(t_{1,\alpha_n}) - v_{\alpha_n}(t_{1,\alpha_n})] = o(1),
$$
that is, $v(0)=0$. On the other hand
$$
0<C_1\leq \lim_{n\to\infty}\|v_{\alpha_n}\|_{\infty} = \lim_{n\to\infty}v_{\alpha_n}(0) = v(0),
$$
which is a contradiction. 
\end{proof}

\begin{remark}
Observe that the previous lemma guarantees asymptotic concentration, as $\alpha \to \infty$, of the zeros of the radial solutions $u_\alpha$ of \eqref{alpha}, namely $r_{i,\alpha} \to 1$ for each $i=1,\cdots,m$. We improve this result in Section \ref{section:asymptotic-behavior}, proving Theorem \ref{zeros-blowup}, where we show the exact rate at which this concentration occurs.  
\end{remark}

Next, we obtain some asymptotic estimates for the local extrema $\mathcal{M}_{i,\alpha}$ of $u_\alpha$, as $\alpha \to \infty$.

\begin{proof}[\textbf{Proof of Theorem \ref{zeros-blowup}. Part (iii)}] By \eqref{eq:nm}, it is enough to prove that
$$
C_1\leq \mathcal{N}_{i,\alpha}\leq C_2, \ \ \forall \ \ i=1,\cdots,m, \ \  \text{and} \ \  \alpha\geq \alpha_0. 
$$

By \cite[Lemma 3.1]{amadori}, $\mathcal{N}_{1,\alpha}>\mathcal{N}_{2,\alpha}>\cdots>\mathcal{N}_{m,\alpha}$ for each $\alpha> \alpha_p$. Therefore, we may take $C_2$ as the constant $C$ from Lemma \ref{limitacao-por-cima}. 

Next for simplicity, we denote by $t_\alpha = t_{m-1,\alpha}$ the largest zero of $v_\alpha$ in (0,1) so that $\mathcal{N}_{m,\alpha}=\displaystyle\max_{t_\alpha<t<1}|v_\alpha(t)|$. Multiplying \eqref{EDO-v} by $v_\alpha$ and integrating by parts from $t_\alpha$ to $1$, we get
$$
\begin{aligned}
\int_{t_\alpha}^1|v_\alpha(t)|^{p+1} t^{M_\alpha-1}dt  = \int_{t_\alpha}^1|v_\alpha'(t)|^2 t^{M_\alpha-1}dt,
\end{aligned}
$$
since $v_\alpha(t_\alpha)=v_\alpha(1)=0$. Moreover, for every $t\in(t_\alpha,1)$,
$$
\begin{aligned}
|v_\alpha(t)| &= \left|\int_{t_\alpha}^t v_\alpha'(s)ds \right| \leq \int_{t_\alpha}^1 \left(s^\frac{1-M_\alpha}{2}\right) \left(|v_\alpha'(s)| s^\frac{M_\alpha-1}{2}\right) ds \\
 &\leq \left(\int_{t_\alpha}^1  s^{1-M_\alpha}ds \right)^{1/2} \left(\int_{t_\alpha}^1 |v_\alpha'(s)|^2 s^{M_\alpha-1}ds \right)^{1/2}. 
\end{aligned}
$$
By Lemma \eqref{t_1,alpha}, there is $\delta>0$ such that $t_\alpha \geq \delta$ and hence
$$
\left(\int_{t_\alpha}^1  s^{1-M_\alpha}ds\right)^{1/2} \leq \left(\int_\delta^1 s^{1-N} ds\right)^{1/2} = \left(\frac{\delta^{2-N}-1}{N-2}\right)^{1/2} =: C. 
$$
Consequently,
$$
|v_\alpha(t)| \leq C \left(\int_{t_\alpha}^1 |v_\alpha'(s)|^2 s^{M_\alpha-1}ds \right)^{1/2} = C \left(\int_{t_\alpha}^1|v_\alpha(s)|^{p+1} s^{M_\alpha-1}ds\right)^{1/2},\quad \forall t\in (t_\alpha,1),
$$
which implies, by definition of $\mathcal{N}_{m,\alpha}$, that
$$
\mathcal{N}_{m,\alpha} \leq C \left(\int_{t_\alpha}^1|v_\alpha(s)|^{p+1} s^{M_\alpha-1}ds\right)^{1/2} \leq C  (\mathcal{N}_{m,\alpha})^{(p+1)/2},
$$
that is,
$$
\mathcal{N}_{m,\alpha} \geq \left(\frac{1}{C}\right)^{2/(p-1)}>0. 
$$
Taking $C_1=C^{2/(1-p)}$, we obtain $\mathcal{N}_{1,\alpha}>\mathcal{N}_{2,\alpha}>\cdots>\mathcal{N}_{m,\alpha} \geq C_1$, which concludes the proof. 
\end{proof}

\section{Proofs of Theorems \ref{main-theorem}, \ref{zeros-blowup} and \ref{theorem-constants}}\label{section:asymptotic-behavior}

\begin{proof}[\textbf{Proof of Theorem \ref{main-theorem}}]
From Lemma \ref{limitacao-por-cima}, $\|v_\alpha\|_{\infty}\leq C$ for all $\alpha> \alpha_0$. By \eqref{v'}, this implies that $|v'_\alpha(t)| \leq C^p|t|$ for all $t\in(0,1]$, and hence $\|v'_\alpha\|_{\infty}\leq C^p$. Moreover, since $v_\alpha$ satisfies \eqref{EDO-v}, for all $t \in(0,1]$ and $\alpha>\alpha_0$,
$$
|v_\alpha''(t)| = \left|(M_\alpha-1)\frac{v_\alpha'(t)}{t} + |v_\alpha(t)|^{p-1}v_\alpha(t)\right| \leq (N-1)C^p + C^p=N C^p,
$$
that is, $\|v_\alpha''\|_{\infty}\leq N C^p$. 

Let $(\alpha_n)$ be any sequence such that $\alpha_n \to \infty$ as $n\to \infty$. Then, by Arzelà-Ascoli theorem, up to a subsequence, $v_{\alpha_n} \to v$ and $v_{\alpha_n}' \to z$ uniformly in $[0,1]$. Then $v$ is differentiable and $v'=z$. We claim that $v\in H^1_{0,rad}(B^2)$ is a weak radial solution of \eqref{problema-limite}. Indeed, let $\psi\in C^1([0,1])$ be such that $\psi(1)=0$. Then, multiplying \eqref{EDO-v} by $\psi$ and integrating by parts, 
$$
\int_0^1v_{\alpha_n}'(t) \psi'(t) t^{M_{\alpha_n}-1}dt = \int_0^1|v_{\alpha_n}(t)|^{p-1} v_{\alpha_n}(t) \psi(t) t^{M_{\alpha_n}-1}dt,\quad \forall n\in \N. 
$$
Hence, by uniform convergence,
$$
\int_0^1v'(t) \psi'(t)\,  t\, dt = \int_0^1|v(t)|^{p-1} v(t) \psi(t) \,t\, dt,
$$
since $M_{\alpha_n} \to 2$. This implies that $v$ is a weak radial solution of \eqref{problema-limite} and, by standard elliptic regularity, $v \in C^2([0,1])$ and satisfies 
$$
v''+ \frac{v'}{t} + |v|^{p-1}v=0\quad \text{in}\ (0,1),\quad v'(0)=v(1)=0. 
$$
Given $\varepsilon \in (0,1)$, for all $t\in [\varepsilon, 1]$, we may write
$$
\begin{aligned}
|v_\alpha'' (t)- v''(t)| &\leq \frac{|v'(t)-(M_\alpha - 1)v_\alpha'(t)|}{t} + \left||v(t)|^{p-1}v(t)- |v_\alpha(t)|^{p-1}v_\alpha(t)\right|\\
& \leq \frac{|v'(t)-(M_\alpha - 1)v_\alpha'(t)|}{\varepsilon} + \left||v(t)|^{p-1}v(t)- |v_\alpha(t)|^{p-1}v_\alpha(t)\right|,
\end{aligned}
$$
which shows that $v''_{\alpha_n} \to v''$ in $C([\varepsilon,1])$. Putting all these convergences together, we infer that $v_{\alpha_n} \to v$ in $C^1([0,1]) \cap C^2([\varepsilon,1])$.

To conclude the proof, we need to check that $v=w$. Since $v$ and $w$ are radial solutions of \eqref{problema-limite} and $w$ has precisely $m$ nodal sets, it is enough to show that $v$ also has $m$ nodal sets. Up to a subsequence, we can assume that $t_{i,\alpha_n}\to t_{i} \in [0,1]$ for each $i=1,\cdots,m$. Of course $0 < \delta \leq t_{1} \leq t_{2} \leq \cdots \leq t_{m}=1$, where $\delta$ is given by Lemma \ref{t_1,alpha}. We claim that $t_{i} < t_{i+1}$ for all $i=1,\cdots,m-1$. Indeed, by contradiction, suppose that $t_{i} = t_{i+1}$ for some $i$. Let $s_{\alpha_n}\in (t_{i,\alpha_n}, t_{i+1,\alpha_n})$ so that $|v_{\alpha_n}(s_{\alpha_n})| = \mathcal{N}_{i+1,\alpha_n}$. Then, by  Theorem \ref{zeros-blowup} (iii),
$$
\left|\frac{v_{\alpha_n}(s_{\alpha_n}) - v_{\alpha_n}(t_{i,\alpha_n})}{s_{\alpha_n} - t_{i,\alpha_n}}\right| = \frac{|v_{\alpha_n}(s_{\alpha_n})|}{s_{\alpha_n} - t_{i,\alpha_n}} \geq \frac{C_1}{s_{\alpha_n} - t_{i,\alpha_n}} \to \infty,\quad n\to \infty,
$$
which contradicts $\|v'_\alpha\|_{\infty}\leq C^p$. Finally, since $|v_{\alpha_n}|>0$ in $(0, t_{1, \alpha})$ and $(t_{i,\alpha_n},t_{i+1,\alpha_n})$, for all $i =1, \ldots, m-1$, we infer that $|v| \geq 0$ in $(0, t_1)$ and $(t_{i},t_{i+1})$ for all $i =1, \ldots, m-1$. Then, by the strong maximum principle and  Theorem \ref{zeros-blowup} (iii), $|v|>0$ in each of the intervals $(0, t_1)$ and $(t_{i},t_{i+1})$ for all $i =1, \ldots, m-1$, and hence $v$ has $m$ nodal sets. 
\end{proof}

\begin{remark}\label{v_M-convergence}
Notice that, for all ${\alpha}^* > \alpha_p$, using the same arguments, it is possible to show that the functions $v_\alpha$ converge in $C^1([0,1]) \cap C^2([\varepsilon,1])$ to the function $v_{{\alpha}^*}$, as $\alpha \to {\alpha}^*$. This can be reformulated as follows. First, take into account that $v_\alpha$ is the unique solution with $m$ nodal sets of the equation
\begin{equation}\label{Q_M}
\left\{
\begin{array}{l}
-v'' - \frac{M-1}{t}v' = |v|^{p-1} v \quad \textrm{in}\ \ (0,1), \vspace{0.3 cm}\\
          v(0)> 0,\ v'(0)=v(1) = 0,
\end{array}
\right.\tag{$Q_M$}
\end{equation}
with $M=M_\alpha = \frac{2(\alpha+N)}{\alpha+2} \in (2,N)$ (see \cite[Proposition 4.2]{cowan}). Then for all $M_0 > 2$ such that $p+1< 2^*_{M_0} := \frac{2M_0}{M_0-2}$, that is, $2< M_0 < \frac{2(p+1)}{p-1}$, the solution $v_M$ of \eqref{Q_M} with $m$ nodal sets converges in $C^1([0,1]) \cap C^2([\varepsilon,1])$ to the solution with $m$ nodal sets of $(P_{M_0})$, as $M \to M_0$. 

\end{remark}

\begin{proof}[\textbf{Proof of Theorem \ref{zeros-blowup}}]
\textbf{Part $\textbf{(i)}$.} By Theorem \ref{main-theorem}, $t_{i,\alpha}\to t_i$, for each $i=1,\cdots,m-1$, where $t_{i,\alpha}$ are the zeros of $v_\alpha$ defined in \eqref{v_alpha}. Since $t_{i,\alpha}=r_{i,\alpha}^\frac{\alpha+2}{2}$,
$$
\frac{\alpha+2}{2}(1-r_{i,\alpha}) = \frac{\alpha+2}{2}\left(1-t_{i,\alpha}^\frac{2}{2+\alpha}\right) = \frac{\alpha+2}{2}\left(1-t_{i}^\frac{2}{2+\alpha} + t_{i}^\frac{2}{2+\alpha} -t_{i,\alpha}^\frac{2}{2+\alpha}\right). 
$$
Note that, by the mean value theorem, there is $c_{i,\alpha}$ between $t_i$ and $t_{i,\alpha}$ such that
$$
\frac{\alpha+2}{2}\left(t_{i}^\frac{2}{\alpha+2} -t_{i,\alpha}^\frac{2}{\alpha+2}\right) = \frac{\alpha+2}{2} \left(\frac{2}{\alpha+2} c_{i,\alpha}^\frac{-\alpha}{\alpha+2}(t_i - t_{i,\alpha}) \right) = c_{i,\alpha}^\frac{-\alpha}{\alpha+2}(t_i - t_{i,\alpha}) \to 0,
$$
because $t_{i,\alpha}\to t_i$ and $c_{i,\alpha}^\frac{-\alpha}{\alpha+2} \to 1/t_i$.  Therefore, as $\alpha \to \infty$,
$$
\frac{\alpha+2}{2}(1-r_{i,\alpha}) = \frac{\alpha+2}{2}\left(1-t_{i}^\frac{2}{\alpha+2}\right) + o(1) \to -\log (t_i),
$$
and hence
\[
\alpha(1-r_{i,\alpha}) \to -2\log(t_i),\quad \text{as}\ \alpha\to\infty. 
\]

\medskip \noindent \textbf{Part $\textbf{(ii)}$. } First we prove that for each $x \in B^N$ fixed, \begin{equation}\label{limit}
\lim_{\alpha\to \infty} \left(\frac{2}{\alpha + 2}\right)^\frac{2}{p-1}u_{\alpha}(x) = w(0) = \|w\|_\infty.
\end{equation}
Indeed, fixed $x\in B^N$, set $r=|x|\in [0,1)$. Since $(v_\alpha)$ converges uniformly to $w$, 
$$
\left(\frac{2}{\alpha + 2}\right)^\frac{2}{p-1}u_{\alpha}(r) = v_\alpha(r^{\frac{\alpha+2}{2}}) = w(r^{\frac{\alpha+2}{2}}) + \left[v_\alpha(r^{\frac{\alpha+2}{2}}) - w(r^{\frac{\alpha+2}{2}})\right]
 = w(0) + o(1),
$$
as $\alpha \to \infty$, which proves \eqref{limit}.

Next, we prove the $L^{\infty}_{loc}$ blow-up of $u_{\alpha}$. From the equation
$$
-(r^{N-1}u_\alpha')' = r^{N-1} |u_\alpha|^{p-1}u_\alpha,\quad u_\alpha(0)>0,\quad u_\alpha'(0)=u_\alpha(1)=0,
$$
we infer that
$$
u_\alpha'(r) = -\frac{1}{r^{N-1}}\int_0^r s^{N-1} |u_\alpha(s)|^{p-1}u_\alpha(s) ds,\quad \forall r\in (0,1). 
$$
By (i), $r_{1,\alpha}\to 1$ as $\alpha\to\infty$. Hence, given $0<R_0<1$, there is $\alpha_1>\alpha_p$ such that $r_{1,\alpha} > R_0$ for all $\alpha> \alpha_1$. Consequently, $u_\alpha(r)>0$ for all $r\in [0,R_0]$ and $\alpha>\alpha_1$. Thus $u_\alpha'(r)<0$ for $r\in [0,R_0]$ and $\alpha>\alpha_1$. Set $W_\alpha (r):=\left(\frac{2}{\alpha + 2}\right)^\frac{2}{p-1}u_{\alpha}(r)$. Then, given $\varepsilon>0$, by \eqref{limit}, there is $\alpha_2>\alpha_1$ such that
$$
|W_\alpha (0) - W_\alpha(R_0)| < \varepsilon/2,\quad |W_\alpha (R_0) - w(0)| < \varepsilon/2 \quad \forall \alpha>\alpha_2. 
$$
Theorefore, the monotonicity of $W_\alpha$ in $[0,R_0]$ for $\alpha>\alpha_2$ implies that for any $r\in [0,R_0]$
$$
\begin{aligned}
|W_\alpha (r) - w(0)| &\leq |W_\alpha (r) - W_\alpha(R_0)| + |W_\alpha (R_0) - w(0)| = W_\alpha (r) - W_\alpha(R_0) + |W_\alpha (R_0) - w(0)|\\
 &\leq W_\alpha (0) - W_\alpha(R_0) + |W_\alpha (R_0) - w(0)| < \varepsilon, \quad \forall \alpha>\alpha_2. 
\end{aligned}
$$
From this we get the uniform convergence, since $\alpha_2$ does not depend on $r$. 
\end{proof}

\begin{proof}[\textbf{Proof of Theorem \ref{theorem-constants}}]
By compact embedding, $S_p$ is achieved at a positive solution $w$ of \eqref{problema-limite}, which,  by \cite{gidas-ni-nirenberg}, is radially symmetric. Moreover, by \eqref{relacao-grad-uv} and Theorem \ref{main-theorem}, as $\alpha \to \infty$,
$$
\begin{aligned}
\left(\frac{2}{\alpha+2}\right)^\frac{p+3}{p-1} \int_{B^N} |\nabla u_\alpha|^2 &= \omega_{N-1} \int_0^1 |v_\alpha'(t)|^2 t^{M_\alpha-1}dt\\
 &\to \omega_{N-1} \int_0^1 |w'(t)|^2\, t\,dt = \frac{\omega_{N-1}}{2\pi} \int_{B^2} |\nabla w|^2. 
\end{aligned}
$$
Then, since
$$
\int_{B^N} |\nabla u_\alpha|^2 = (S_{\alpha,p}^{R})^\frac{p+1}{p-1} \quad \text{and}\quad \int_{B^2} |\nabla w|^2 = (S_{p})^\frac{p+1}{p-1}, 
$$
 we infer that
\[
 \left(\frac{2}{\alpha+2}\right)^\frac{p+3}{p+1} S_{\alpha,p}^{R} \to \left(\frac{\omega_{N-1}}{2\pi}\right)^\frac{p-1}{p+1} S_{p},\quad \text{as} \ \ \alpha \to \infty. \qedhere
\]
\end{proof}

\section{Asymptotic distribution for the spectrum of the linearized operators and proofs of Theorems \ref{lower-bounds-N>2} and \ref{monotonicity-N>2}} \label{sec:spec} Here we analyze the spectrum of some linear operators. More precisely, we derive \linebreak asymptotic expansions of the negative eigenvalues of the linearized operators associated to the Hénon equation \eqref{alpha} at the radial solution $u_\alpha$, with $m$ nodal sets, as $\alpha\to \infty$. For this, consider the linearized operators $L^\alpha : H^2(B^N)\cap H^1_0(B^N) \to L^2(B^N)$ given by
$$
\varphi \mapsto L^\alpha \varphi := -\Delta \varphi -p|x|^\alpha |u_\alpha|^{p-1}\varphi,\quad \alpha > \alpha_p.
$$
In order to obtain a more precise description of the distribution of the eigenvalues of $L^\alpha$, as $\alpha \to\infty$, we consider the singular eigenvalue problem

\begin{equation}\label{sing-eigenvalue}
L^\alpha \varphi = {\widehat{\Lambda}} \frac{\varphi}{{|x|^2}}.
\end{equation}
Since $N\geq 3$, we recall that, due to the Hardy inequality, \eqref{sing-eigenvalue} is well defined in $H^1_0(B^N)$. Moreover, $\widehat{\Lambda}$ is an eigenvalue for \eqref{sing-eigenvalue} if there exists $0 \neq \varphi \in H^1_0(B^N)$ such that
$$
\int_{B^N} \nabla \varphi \nabla \phi -p|x|^\alpha |u_\alpha|^{p-1} \varphi \phi \,dx = \widehat{\Lambda} \int_{B^N} \frac{\varphi \phi}{|x|^2} \,dx,\quad \forall \phi \in H^1_0(B^N). 
$$
In addition, from \cite[Proposition 4.1]{amadori1}, it is known that each of these negative eigenvalues is given (and vice versa) by the following decomposition
\begin{equation}\label{decomposition2}
\widehat{\Lambda} = \widehat{\Lambda}^{rad} + j(N-2+j),
\end{equation}
where $\widehat{\Lambda}^{rad}$ is a negative radial eigenvalue of \eqref{sing-eigenvalue} and $j$ is some nonnegative integer. So, the negative eigenvalues of \eqref{sing-eigenvalue} can be given in terms of its negative radial eigenvalues. For this reason, we study the asymptotic distribution of negative radial eigenvalues for \eqref{sing-eigenvalue}. 

 Since $u_\alpha$ has $m$ nodal sets,  \eqref{sing-eigenvalue} admits precisely $m$ negative radial eigenvalues; see \cite[Theorem 1.3]{amadori}. Denote these eigenvalues by ${\widehat{\Lambda}}_{1,\alpha}< \cdots < {\widehat{\Lambda}}_{m,\alpha}$, and by $\varphi_{i,\alpha}$ their respective eigenfunctions. 
 
For each $M\geq 2$, set
$$
H^1_{0,M} := \left\{w:[0,1]\to \R\ \text{measurable}:\quad 
\begin{aligned}
&\text{$w$ has first order weak derivative, $w(1)=0$ and}\\ 
&\int_0^1 |w'(t)|^2 t^{M-1}dt +\int_0^1 w^2(t)t^{M-3}dt <\infty
\end{aligned}
\right\}, 
$$
which is a Hilbert space with inner product
$$
(z,v)\mapsto \int_0^1 z'(t)v'(t) t^{M-1}dt +\int_0^1 z(t)v(t)t^{M-3}dt. 
$$
 Observe, in particular, that we may rewrite $H^1_{0,2}$ as
\[
H^1_{0,2} := \left\{ u \in H^1_{0, rad}(B^2);  \int_{B^2} \frac{u^2}{|y|^2} < \infty\right\}.
\]

 Now let $w$ be a radial solution of \eqref{problema-limite} with $m$ nodal sets, and consider the singular eigenvalue problem associated to \eqref{problema-limite} at $w$, namely
   \begin{equation*}
-\Delta \psi -p|w|^{p-1} \psi = \lambda \frac{\psi}{|y|^2}\ \  \text{in}\ \ B^2\backslash\{0\},\quad  \psi\in H^1_{0,2},
\end{equation*}
which also has  $m$ negative radial eigenvalues, say $\lambda_1<\cdots<\lambda_m$.

\begin{lemma}\label{limit-eigenvalues}
$$
\lim_{\alpha\to\infty}\left(\frac{2}{\alpha+2}\right)^2 \widehat{\Lambda}_{i,\alpha} = \lambda_i, \quad \forall\,  i=1,\cdots,m. 
$$
\end{lemma}

\begin{proof}
As before, we perform the change of variables $t = r^\frac{\alpha+2}{2}$, $r=|x|$, and write
 $$
 v_\alpha (t) = \left(\frac{2}{\alpha+2}\right)^\frac{2}{p-1} u_\alpha(r),\quad \psi_{i,\alpha}(t) = \varphi_{i,\alpha}(r),\quad t\in(0,1). 
 $$
Since $\varphi_{i,\alpha}$ is radial and solves \eqref{sing-eigenvalue} with $\widehat{\Lambda}={\widehat{\Lambda}}_{i,\alpha}$, using \cite[Proposition 4.2]{cowan}, $\psi_{i,\alpha}$ satisfies
 \begin{equation}\label{EDO-psi}
-\psi_{i,\alpha}'' - \frac{M_\alpha-1}{t} \psi_{i,\alpha}'  -p|v_\alpha|^{p-1} \psi_{i,\alpha} = \lambda_{i,\alpha}\frac{\psi_{i,\alpha}}{t^2},\quad t\in (0,1),\quad \psi_{i,\alpha}(1)=0,
\end{equation}
with $M_\alpha = \frac{2(\alpha+N)}{\alpha+2}$ and $\lambda_{i,\alpha} = \left(\frac{2}{\alpha+2}\right)^2 \widehat{\Lambda}_{i,\alpha}$. Recall that, see \cite[Eq. (3.24)]{amadori1}, each eigenvalue $\lambda_{i,\alpha}$ is characterized by
$$
\lambda_{i,\alpha} = \min_{\overset{Z \subset H^1_{0,M_\alpha}}{\dim Z = i}}\max_{\overset{z\in Z}{z\neq 0}} \frac{\int_0^1 [|z'(t)|^2-p |v_\alpha|^{p-1}z^2(t)]t^{M_\alpha-1} dt}{\int_0^1 z^2(t) t^{M_\alpha-3} dt}.
$$
and this min-max problem is attained at the subspace $Z_i = span\{ \psi_{1, \alpha}, \ldots, \psi_{i,\alpha}\}$, at $z_i = \psi_{i,\alpha}$. By \cite[Proposition 3.8]{amadori1}, the eigenfunctions $\psi_{k,\alpha}$ lie on $H^1_{0,2}\subset H^1_{0, M_{\alpha}}$, for all $k \in\{1, \ldots, m\}$ and $\alpha >\alpha_p$.   Therefore,
\begin{equation}\label{H102-eigen}
\lambda_{i,\alpha} = \min_{\overset{Z \subset H^1_{0,2}}{\dim Z = i}}\max_{\overset{z\in Z}{z\neq 0}} \frac{\int_0^1 [|z'(t)|^2-p |v_\alpha|^{p-1}z^2(t)]t^{M_\alpha-1} dt}{\int_0^1 z^2(t) t^{M_\alpha-3} dt},\quad \forall i=1,\cdots,m. 
\end{equation}
By Theorem \ref{main-theorem}, $|v_\alpha|^{p-1}\to |w|^{p-1}$ uniformly in $[0,1]$, as $\alpha\to \infty$.  Similarly to \cite[Lemma 3.5]{weth-bifurcation}, since $M_\alpha\to 2$ as $\alpha \to \infty$, this implies that 
$$
\lambda_{i,\alpha} \to \min_{\overset{Z \subset H^1_{0,2}}{\dim Z = i}}\max_{\overset{z\in Z}{z\neq 0}} \frac{\int_0^1 [|z'(t)|^2-p |w|^{p-1}z^2(t)]\,t\, dt}{\int_0^1 z^2(t) t^{-1} dt},\quad \forall i=1,\cdots,m, \ \ \text{as} \ \ \alpha\to \infty,
$$
that is, $\lambda_{i,\alpha} \to \lambda_{i}$ for all $i$, as we wanted, since $\lambda_{i,\alpha} = \left(\frac{2}{\alpha+2}\right)^2 \widehat{\Lambda}_{i,\alpha}$. 
 \end{proof} 
 
 \begin{remark}\label{expansions}
 By the previous lemma, 
 $$
 \frac{\widehat{\Lambda}_{i,\alpha}}{\alpha^2} \to \frac{\lambda_i}{4},\quad \alpha\to\infty,\quad \forall i=1,\cdots,m,
 $$
 and therefore 
 \begin{equation}\label{asymp-distribution}
 \widehat{\Lambda}_{i,\alpha} = \frac{\lambda_{i}}{4}\alpha^2 + o(\alpha^2),\quad \alpha \to \infty,\quad i=1,\cdots,m. 
 \end{equation}
 \end{remark}

\begin{proof}[\textbf{Proof of Theorem \ref{lower-bounds-N>2}}]
We first recall that the Morse index of $u_\alpha$ is precisely the number of negative eigenvalues (with their multiplicity) of \eqref{sing-eigenvalue}. Moreover, each of these negative eigenvalues is given (and conversely) by the decomposition \eqref{decomposition2}. Thus, for each $i=1,\cdots,m$, we need to know the numbers $j\in \N$ that satisfy
\begin{equation}\label{decomposition-henon}
\widehat{\Lambda}_{i,\alpha} + j(N-2+j)<0. 
\end{equation}
Using \eqref{asymp-distribution}, the above inequality is equivalent to
$$
j< \frac{\sqrt{(N-2)^2 - (N-2)\lambda_i \alpha^2 + o(\alpha^2)}-(N-2)}{2}. 
$$
Since $0<-\lambda_m/2<-(N-2)\lambda_i$ for $N\geq 3$ and $i=1,\cdots,m$, there exists $\alpha^* >\alpha_p$ such that 
$$
J_\alpha \leq \frac{\sqrt{(N-2)^2-(\lambda_m/2)\alpha^2}}{2} +1 < \frac{\sqrt{(N-2)^2 - (N-2)\lambda_i \alpha^2 + o(\alpha^2)}-(N-2)}{2},  
$$
for all $\alpha\geq \alpha^*$ and $i=1,\cdots,m$. Consequently, whenever $j\leq J_\alpha$ with $\alpha\geq \alpha^*$, we get \eqref{decomposition-henon} for all $i=1,\cdots,m$. 

To show \eqref{lower-bound:Morse-index}, observe that the radial eigenvalues of \eqref{sing-eigenvalue} are simple and the numbers $j(N-2+j)$, with $j\in \N$, are eigenvalues of the Laplace-Beltrami operator $-\Delta_{S^{N-1}}$ whose multiplicity is $N_j$. It follows then, from \cite[Proposition 4.1]{amadori1}, that each eigenvalue $\widehat{\Lambda}_{i,\alpha} + j(N-2+j)$ has multiplicity $N_j$. Therefore we conclude that the linearized operator $L^\alpha$ has at least $m\sum_{j=1}^{J_\alpha} N_j$ negative eigenvalues (with their multiplicity) associated to nonradial eigenfunction for all $\alpha\geq \alpha^*$. Adding the number of negative radial eigenvalues, which is $m$, we obtain \eqref{lower-bound:Morse-index}. 

Similarly, we can check \eqref{lower-bound:Morse-index2}. Indeed, given $\theta > 1$, then $0<-\lambda_{m-1}/\theta<-(N-2)\lambda_i$ for $N\geq 3$ and $i=1,\cdots,m-1$. Thus, there exists $\alpha^\star=\alpha^\star(\theta) >\alpha_p$ such that
$$
K_\alpha(\theta) < \frac{\sqrt{(N-2)^2 - (N-2)\lambda_i \alpha^2 + o(\alpha^2)}-(N-2)}{2}\quad \forall \alpha\geq \alpha^\star,\ \ i=1,\cdots,m-1.   
$$
This implies that \eqref{decomposition-henon} holds for all $i=1,\cdots,m-1$, whenever $j\leq K_\alpha(\theta)$ with $\alpha\geq \alpha^\star$, and hence \eqref{lower-bound:Morse-index2} follows.
\end{proof}

To prove the monotonicity of the Morse indices with respect to $\alpha$ we need to improve the asymptotic expansions \eqref{asymp-distribution}.  It is proved in \cite{weth-bifurcation} that the functions $\alpha \mapsto \widehat{\Lambda}_i(\alpha) := \widehat{\Lambda}_{i,\alpha}$, $i= 1,\cdots,m$, belong to $C^1(\alpha_p,\infty)$ and satisfy
\begin{equation}\label{asymp-distribution-weth}
 \widehat{\Lambda}_i(\alpha) = \nu_i^* \alpha^2 + c_i^* \alpha + o({\alpha})\quad \text{and}\quad \widehat{\Lambda}_i'(\alpha) = 2\nu_i^* \alpha + c_i^* + o(1),
\quad \text{as}\ \alpha \to \infty,
\end{equation}
where $c_i^*$, $i=1,\cdots,m$, are constants and the values $\nu_i^*$ are the negative eigenvalues of a suitable one-dimensional eigenvalue problem; see \cite[Theorem 1.3]{weth-bifurcation}. In terms of the eigenvalues $\lambda_i$, $i=1,\cdots,m$, we may write the asymptotic expansions \eqref{asymp-distribution-weth} as follows. For each $M\geq 2$ with $p (M-2)< M+2$, let $v_M$ be the unique solution with $m$ nodal sets of \eqref{Q_M} such that $v_M(0)>0$. Then the eigenvalue problem
\begin{equation}\label{EDO-psi-2}
-\psi'' - \frac{M-1}{t}\psi' -p|v_M|^{p-1} \psi = \mu \frac{\psi}{t^2},\quad t\in (0,1),\quad  \psi\in H^1_{0,M},
\end{equation}
has exactly $m$ negative eigenvalues, say $\mu_1(M) < \cdots < \mu_m(M) < 0$. Observe that, putting $M=M_\alpha = \frac{2(\alpha+N)}{\alpha+2}$, the problems \eqref{EDO-psi-2} and \eqref{EDO-psi} are the same. Thus
$$
\mu_i(M_\alpha) = \lambda_{i,\alpha} = \left(\frac{2}{\alpha+2}\right)^2\widehat{\Lambda}_i(\alpha), 
$$
and therefore, the maps $M\mapsto \mu_i(M)$ are $C^1$-functions for each $i=1,\cdots,m$.

\begin{proposition}\label{expansion}
The $C^1$-functions $\alpha \mapsto \widehat{\Lambda}_i(\alpha)$, $i=1,\cdots,m$, satisfy
$$
 \widehat{\Lambda}_i(\alpha) := \widehat{\Lambda}_{i,\alpha} = \frac{\lambda_i}{4} \alpha^2 + c_i \alpha + o({\alpha})\quad \text{and}\quad \widehat{\Lambda}_i'(\alpha) = \frac{\lambda_i}{2} \alpha + c_i + o(1)
\quad \text{as}\ \alpha \to \infty,
$$
where $c_i= \frac{(N-2)\mu_i'(2)}{2} + \lambda_i$, $i=1,\cdots,m$. 
\end{proposition}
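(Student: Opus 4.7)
The plan is to exploit the exact identity
$$
\widehat{\Lambda}_i(\alpha) = \left(\frac{\alpha+2}{2}\right)^2 \mu_i(M_\alpha),\qquad M_\alpha = 2+\frac{2(N-2)}{\alpha+2},
$$
which is displayed just before the statement, together with two ingredients already in hand: the $C^1$-regularity of $M\mapsto \mu_i(M)$ up to the endpoint $M=2$ (recorded from \cite{weth-bifurcation} and Remark \ref{v_M-convergence}), and the value at the endpoint $\mu_i(2)=\lambda_i$, which follows from Lemma \ref{limit-eigenvalues}. Thus the proposition will be proved by a careful Taylor expansion in the small parameter $M_\alpha-2=2(N-2)/(\alpha+2)=O(1/\alpha)$ and algebraic bookkeeping in powers of $\alpha$.

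First I would write
$$
\mu_i(M_\alpha) = \lambda_i + \mu_i'(2)(M_\alpha-2) + o(M_\alpha-2) = \lambda_i + \frac{2(N-2)\mu_i'(2)}{\alpha+2} + o(1/\alpha),
$$
valid as $\alpha\to\infty$ because $M\mapsto \mu_i(M)$ is $C^1$ at $M=2$. Multiplying by $\left(\frac{\alpha+2}{2}\right)^2 = \frac{\alpha^2}{4}+\alpha+1$ and collecting powers of $\alpha$, the $\alpha^2$-coefficient is $\lambda_i/4$, while the $\alpha$-coefficient gets one contribution $\lambda_i$ (from the product $\alpha\cdot\lambda_i$) and one contribution $\tfrac{(N-2)\mu_i'(2)}{2}$ (from $\tfrac{\alpha+2}{2}\cdot(N-2)\mu_i'(2)$ divided by the denominator $\alpha+2$ already absorbed). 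All remaining terms are either constants or $o(\alpha)$, hence $o(\alpha)$. This gives
$$
\widehat{\Lambda}_i(\alpha) = \frac{\lambda_i}{4}\alpha^2 + c_i\alpha + o(\alpha),\qquad c_i = \lambda_i + \frac{(N-2)\mu_i'(2)}{2},
$$
which is exactly the first expansion.

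For the derivative I would differentiate the identity directly. Using $M_\alpha'(\alpha) = -2(N-2)/(\alpha+2)^2$, the product rule produces
$$
\widehat{\Lambda}_i'(\alpha) = \frac{\alpha+2}{2}\,\mu_i(M_\alpha) - \frac{N-2}{2}\,\mu_i'(M_\alpha).
$$
Continuity of $\mu_i'$ at $M=2$ gives $\mu_i'(M_\alpha) = \mu_i'(2) + o(1)$, so the second summand equals $-\tfrac{N-2}{2}\mu_i'(2) + o(1)$. Substituting the Taylor expansion of $\mu_i(M_\alpha)$ into the first summand yields $\tfrac{\lambda_i}{2}\alpha + \lambda_i + (N-2)\mu_i'(2) + o(1)$. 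Adding the two contributions and using the cancellation $\lambda_i + (N-2)\mu_i'(2) - \tfrac{N-2}{2}\mu_i'(2) = c_i$ gives $\widehat{\Lambda}_i'(\alpha) = \tfrac{\lambda_i}{2}\alpha + c_i + o(1)$.

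The calculation itself is entirely routine, so the only nontrivial point is the input that $\mu_i$ is $C^1$ at the boundary value $M=2$, where the problem degenerates to the two-dimensional Lane–Emden eigenvalue problem; this is the delicate point but it is already supplied to us. Once that is granted, nothing further is required beyond the Taylor expansion and the identity relating $\widehat{\Lambda}_i(\alpha)$ to $\mu_i(M_\alpha)$.
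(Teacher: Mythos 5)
Your proof is correct and follows essentially the same route as the paper's: both start from the identity $\widehat{\Lambda}_i(\alpha)=\left(\frac{\alpha+2}{2}\right)^2\mu_i(M_\alpha)$, Taylor-expand $\mu_i$ and $\mu_i'$ at $M=2^+$, and collect powers of $\alpha$ (your formula $\widehat{\Lambda}_i'(\alpha)=\frac{\alpha+2}{2}\mu_i(M_\alpha)-\frac{N-2}{2}\mu_i'(M_\alpha)$ is exactly the paper's derivative expression after cancelling $(\alpha+2)^2$). You correctly flag the only nontrivial input, namely $C^1$-regularity of $M\mapsto\mu_i(M)$ up to $M=2$ and the identification $\mu_i(2)=\lambda_i$, both of which the paper also takes as supplied from \cite{weth-bifurcation} and the surrounding discussion.
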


\begin{proof}
We may write 
$$
\mu_i(M) = \mu_i(2) + \mu_i'(2)(M-2) + o(M-2)\quad \text{and}\quad \mu_i'(M) = \mu_i'(2) + o(1)\quad \text{as } M\to 2^+. 
$$
Note that $M_\alpha \to 2^+$ as $\alpha \to \infty$. So, performing the transformation $M \leftrightarrow M_\alpha$, a $o(M - 2)$-function as $M \to 2^+$ corresponds to a  $o(M_\alpha - 2) = o(\frac{1}{\alpha})$-function as $\alpha \to \infty$. Hence 
$$
\mu_i(M_\alpha) = \mu_i(2) + \frac{2(N-2)\mu_i'(2)}{\alpha+2} + o\left(\frac{1}{\alpha}\right)\quad \text{and}\quad \mu_i'(M_\alpha) = \mu_i'(2) + o(1)\quad \text{as } \alpha \to \infty. 
$$
Therefore
$$
\begin{aligned}
\widehat{\Lambda}_i(\alpha) &= \left(\frac{\alpha+2}{2}\right)^2 \mu_i(M_\alpha) = \frac{\mu_i(M_\alpha)}{4}\alpha^2 + \mu_i(M_\alpha)\alpha + \mu_i(M_\alpha)\\
&= \frac{\mu_i(2)}{4}\alpha^2 + \frac{(N-2)\mu_i'(2)}{2}\frac{\alpha^2}{\alpha+2}+ \mu_i(2)\alpha + o(\alpha)\\
&= \frac{\mu_i(2)}{4}\alpha^2 + \left[\frac{(N-2)\mu_i'(2)}{2} + \mu_i(2)\right]\alpha + o(\alpha),\quad \text{as $\alpha\to \infty$},
\end{aligned}
$$
and 
$$
\begin{aligned}
\widehat{\Lambda}_i'(\alpha) &= \frac{\alpha+2}{2} \mu_i(M_\alpha) - \left(\frac{\alpha+2}{2}\right)^2 \mu_i'(M_\alpha) \frac{2(N-2)}{(\alpha+2)^2}\\
&= \frac{\alpha+2}{2} \mu_i(2) + (N-2)\mu_i'(2) - \frac{(N-2)\mu_i'(2)}{2} + o(1)\\
&= \frac{\mu_i(2)}{2} \alpha + \mu_i(2) + \frac{(N-2)\mu_i'(2)}{2} + o(1),\quad \text{as } \alpha\to \infty. 
\end{aligned}
$$
Finally, observing that $\mu_i(2) = \lambda_i$, we conclude the proof. 
\end{proof}

\begin{proof}[\textbf{Proof of Theorem \ref{monotonicity-N>2}}]
It is a consequence of \eqref{decomposition2} and Proposition \ref{expansion}. Indeed, since $\lambda_i <0$, by Proposition \ref{expansion},
$$
\widehat{\Lambda}_i'(\alpha) \to -\infty,\quad \alpha \to \infty,\quad \forall i=1,\cdots,m.
$$
This shows in particular that there exists $\alpha_* > \alpha_p$ such that $\widehat{\Lambda}_i'(\alpha)<0$ for all $\alpha\geq \alpha_*$, that is, the function $\alpha \mapsto \widehat{\Lambda}_{i,\alpha}$ is strictly decreasing in $[\alpha_*,\infty)$ for each $i=1,\cdots,m$. This implies, using \eqref{decomposition2}, that the Morse index $m(u_\alpha)$ is nondecreasing with respect to $\alpha$ in $[\alpha_*,\infty)$. 
\end{proof}

\end{document}